\newtheorem{theorem}{Theorem}[section]
\newtheorem{lemma}[theorem]{Lemma}
\newtheorem{proposition}[theorem]{Proposition}
\newtheorem{corollary}[theorem]{Corollary}
\newtheorem{conconjecture}[theorem]{Confirmed conjecture}
\theoremstyle{definition}
\newenvironment{assumption}[1]{
	
	\model
}{\endmodel}
\theoremstyle{remark}
\newtheorem{remark}[theorem]{Remark}
\numberwithin{equation}{section}
\newcommand{\RR}{\mathbb{R}}
\newcommand{\CC}{\mathbb{C}}
\newcommand{\NN}{\mathbb{N}}
\newcommand{\ZZ}{\mathbb{Z}}
\newcommand{\cF}{\mathcal{F}}
\newcommand{\cD}{\mathcal{D}}
\newcommand{\cK}{\mathcal{K}}
\newcommand{\eps}{\varepsilon}
\newcommand{\euler}{\mathrm{e}}
\newcommand{\fh}{\mathfrak{h}}
\newcommand{\grad}{\nabla}
\newcommand{\fa}{{\mathfrak a}}
\newcommand{\fv}{{\mathfrak v}}
\newcommand{\indic}{\mathbf{1}}
\DeclareMathOperator{\Ran}{Ran}
\newcommand*\Diff[1]{\mathop{}\!\mathrm{d}#1}
\let\Re\relax
\DeclareMathOperator{\Re}{Re}
\let\Im\relax
\DeclareMathOperator{\Im}{Im}
\newcommand{\loc}{\mathrm{loc}}
\newcommand{\obs}{\mathrm{obs}}
\DeclarePairedDelimiter{\abs}{|}{|}
\DeclarePairedDelimiter{\norm}{\lVert}{\rVert}
\newcommand{\bes}{\begin{equation*}}
\newcommand{\ees}{\end{equation*}}
\newcommand{\be}{\begin{equation}}
\newcommand{\ee}{\end{equation}}
\newcommand{\eqs}[1]{\begin{align*}#1\end{align*}}
\definecolor{darkred}{rgb}{0.5,0,0}
\definecolor{darkgreen}{rgb}{0,0.5,0}
\definecolor{darkblue}{rgb}{0,0,0.5}
\title[Spectral inequality for power growth potentials]
{Spectral inequality with sensor sets of decaying density for Schr\"odinger operators with power growth potentials}
\subjclass[2010]{Primary 35Pxx; Secondary 35J10, 35B40.}
\keywords{Spectral inequalities, uncertainty relation, Schroedinger operator, Shubin operator, decay of eigenfunctions,
confinement potential, observability.}
\author[A.~Dicke]{Alexander Dicke}
\author[A.~Seelmann]{Albrecht Seelmann}
\author[I.~Veseli\'c]{Ivan Veseli\'c}
\address[A.D., A.S., I.V.]{
	Technische Univer\-si\-t\"at Dortmund,
	Germany
}
\urladdr{\url{https://www.mathematik.tu-dortmund.de/lsix/research/analysis/}}
\email{adicke.math@gmail.com, \{albrecht.seelmann,ivan.veselic\}@tu-dortmund.de}
\begin{document}
%
%%%%%%%%%%%%%%%%%%%%%%
%%%%%% ABSTRACT %%%%%%
%%%%%%%%%%%%%%%%%%%%%%
%
\begin{abstract}
	We prove a spectral inequality (a specific type of uncertainty relation) for Schr\"odinger operators with confinement potentials,
	in particular of Shubin-type. The sensor sets are allowed to decay exponentially,
	where the precise allowed decay rate depends on the potential.
	The proof uses an interpolation inequality derived by Carleman estimates, quantitative
	weighted $L^2$-estimates and an $H^1$-concentration estimate, all of them for functions in a spectral subspace of the operator.
\end{abstract}
\maketitle
%
%%%%%%%%%%%%%%%%%%%%%%%%%%%%%%%%%%%%%%%%%%%%%%%%%%%%%%%%%%%%%%%%%%%%%%%%%%%%%%%%%%%%%%%%%%%%%%%%%%%%%%%%%%%%%%%%%%%%%%%%%%%%%%%%%%%%%%%%%%%%%%%%%%%%%%%%%%
%%%%%%%%%%%%%%%%%%%%%%%%%%%%%%%%%%%%%%%%%%%%%%%%%%%%%%%%%%%%%%%%%%%%%% INTRODUCTION %%%%%%%%%%%%%%%%%%%%%%%%%%%%%%%%%%%%%%%%%%%%%%%%%%%%%%%%%%%%%%%%%%%%%%
%%%%%%%%%%%%%%%%%%%%%%%%%%%%%%%%%%%%%%%%%%%%%%%%%%%%%%%%%%%%%%%%%%%%%%%%%%%%%%%%%%%%%%%%%%%%%%%%%%%%%%%%%%%%%%%%%%%%%%%%%%%%%%%%%%%%%%%%%%%%%%%%%%%%%%%%%%
%
\section{Introduction, main results, and discussion}
In the context of control theory, a \emph{spectral inequality} for a nonnegative
selfadjoint operator $H$ in $L^2(\RR^d)$ is an inequality of the form
\be\label{eq:abstract-spectral-inequality}
	\norm{f}_{L^2(\RR^d)}
	\leq
	d_0\euler^{d_1 \lambda^s}\norm{f}_{L^2(\omega)}
	\quad
	\text{for all}
	\quad
	f \in \Ran P_\lambda(H)
	,
	\
	\lambda \geq 1
	,
\ee
with a measurable set $\omega \subset \RR^d$ and some $s \in (0,1)$ and $d_0,d_1 > 0$.
Having the application in control theory in mind, we will refer to $\omega$ as a sensor set.
Here, $\lambda\mapsto P_\lambda(H) = \indic_{(-\infty,\lambda]}(H)$ denotes the resolution of identity associated to $H$.
Once Inequality~\eqref{eq:abstract-spectral-inequality} is at disposal, the famous Lebeau-Robbiano method \cite{LebeauR-95},
see also \cite{TenenbaumT-11,BeauchardPS-18,NakicTTV-20, GallaunST-20}, allows to conclude
(final state) observability and null-controllability
from $\omega$ of the abstract Cauchy problem associated to $-H$.
Note that the spectral inequality is a manifestation of the uncertainty principle
and has been used also in other contexts albeit with a different name, see, e.g., \cite{EgidiNSTTV-20}.

The new results in this paper concern Schr\"odinger operators $H = -\Delta + V$
with potentials $V$ growing at infinity.
A paradigmatic model in this context is the harmonic oscillator $H = -\Delta + \abs{x}^2$,
for which spectral inequalities were proven in \cite{BeauchardPS-18,BeauchardJPS-21,EgidiS-21,MartinPS-22,DickeSV-23}.
See also \cite{DickeSV-22} for a treatment of partial harmonic oscillators.
In particular, \cite{DickeSV-23} shows that \eqref{eq:abstract-spectral-inequality}
holds for $H = -\Delta + \abs{x}^2$ with measurable sensor sets $\omega \subset \RR^d$
satisfying
\be\label{eq:thick-decay}
	\frac{\abs{\omega\cap (k+(-\rho/2,\rho/2)^d)}}{\rho^d}
	\geq
	\gamma^{1+\abs{k}^\alpha}
	\quad
	\text{for all}
	\quad
	k \in \ZZ^d
	,
\ee
with some parameters $\rho > 0$, $\gamma \in (0,1]$, and $\alpha \in [0,1)$.

Sets satisfying \eqref{eq:thick-decay} with $\alpha = 0$ are called \emph{thick sets}.
They are crucial in two aspects:
First, for the pure Laplacian $H = -\Delta$ an inequality like \eqref{eq:abstract-spectral-inequality}
holds \emph{if and only if} $\omega$ is a thick set. This has been studied in
\cite{Panejah-61,Panejah-62,Kacnelson-73,LogvinenkoS-74}
culminating in sharp inequalities established in \cite{Kovrijkine-thesis,Kovrijkine-01}.
These results are in the literature often referred under the term `Logvinenko--Sereda inequality'.
Second, as shown in \cite{EgidiV-18} and \cite{WangWZZ-19} independently,
the heat equation on $\RR^d$ is observable (hence final state controllable) from a set $\omega\subset \RR^d$
\emph{if and only if} $\omega$ is a thick set.
This illustrates that this type of set is a benchmark for comparison
for the topics at hand.

For a Schr\"odinger operator $H = -\Delta + V$ with a bounded, real-valued, suitably analytic potential $V$
vanishing at infinity, a spectral inequality with thick sets $\omega$ was shown in \cite{LebeauM-19}.
Thick sets are also relevant for spectral and parabolic observability inequalities on bounded domains,
mostly if one considers a sequence of them like $(-L,L)^d$ and aims for bounds uniform in $L\in\NN$
cf.~\cite{EgidiV-20} and \cite{SeelmannV-20}.
See also \cite{LogunovM-20,BurqM-23} for related results for divergence type operators.

Although we conjecture that a condition similar to \eqref{eq:thick-decay} would be appropriate in the context of this paper,
see Section \ref{sec:conjectures} for a more precise statement, due to the methods applied, we need to assume more regularity of the sensor set.
More precisely, we have to work in the category of open rather than measurable sets.

In this setting the natural analog of thick sets is the following: For $0 < \delta < 1/2$,
a measurable $\omega$ is called a \emph{$\delta$-equidistributed set},
if each intersection $\omega \cap (k + (-1/2,1/2)^d)$, $k \in \ZZ^d$, contains a ball of radius $\delta$.
This term was introduced in \cite{RojasMolinaV-13} (but see also \cite{Germinet-08,GerminetK-13} for related notions)
in the context of random Schr\"odinger operators with bounded potentials
and shown to be sufficient for a weaker form of uncertainty relation than \eqref{eq:abstract-spectral-inequality},
where $f$ needs to be an individual eigenfunction of $H$, rather than an element of a spectral subspace $\Ran P_\lambda(H)$.
This condition on $f$ was subsequently relaxed in \cite{Klein-13} to allow for $f \in  \Ran \indic_{(\lambda-\eps,\lambda]}(H) $,
for specified, but small $\eps>0$. In \cite{KleinT-16} the argument was extended to potentials with uniformly controlled local singularities.
Finally, a proper spectral inequality for Schr\"odinger operators with bounded potentials of the form
\eqref{eq:abstract-spectral-inequality} was obtained in \cite{NakicTTV-18,NakicTTV-20a},
the difference being that the first paper treats domains of the type $(-L,L)^d$, whereas the second allows also
certain `rectangular' unbounded domains, as well,  including $\RR^d$ itself.
Again, this result was extended in \cite{DickeRST-20} to potentials with certain local singularities, giving an improvement of \cite{KleinT-16}, as well.
A common feature of these papers is that they trace the dependence of uncertainty relation on the geometry and model parameters,
in contrast to a many earlier papers with qualitative results. In particular, \cite{NakicTTV-20} extends the techniques of the ingenious paper
\cite{JerisonL-99} (itself based on the seminal \cite{DonnellyF-88}) from compact domains to $\RR^d$  using a geometric covering construction and quantifying the above mentioned parameter dependence.

Let us come back to the harmonic oscillator.
For the harmonic oscillator on the domain $\RR^d$ thick set have been shown in \cite{BeauchardJPS-21} to be sufficient for a spectral inequality.
In \cite{EgidiS-21} this was generalized to certain unbounded domains exhibiting sufficient symmetry.
However, in contrast to the pure Laplacian, this condition is not optimal here.
In fact, the fast growth of the quadratic potential yields fast decay of eigenfunctions of the harmonic oscillator, i.\,e., Hermite functions.
This explains why one can weaken the geometric condition on the sensor sets to a decay of the form \eqref{eq:thick-decay}.
In particular, \eqref{eq:thick-decay} allows the sensor set to have finite Lebesgue measure, see,
e.\,g., the set $\omega$ considered in \eqref{eq:finite-measure} below.

One naturally expects similar phenomena for Schr\"odinger operators of the
form $H = -\Delta + \abs{x}^\tau$ with general $\tau > 0$,
in particular that the unbounded potential enforces fast decay of the eigenfunctions.
This should make it possible to allow a similar (or even faster) decay in the sensor set as in \eqref{eq:thick-decay} above,
while still obtaining a spectral inequality of the form \eqref{eq:abstract-spectral-inequality} with explicit dependence on the
geometry of $\omega$.

An important step in this direction was achieved in \cite{Miller-08}, who proves a spectral inequality for $\tau\in2\NN$ with the sensor set $\omega$ being a cone of the form
\be\label{eq:cone}
	\{ x \in \RR^d \colon \abs{x} \geq r_0\text{ and } x/\abs{x} \in \Omega_0\}
	\quad
	\text{for some open}
	\quad
	\Omega_0 \subset \mathbb{S}^{d-1}
	.
\ee
Note that the cone has infinite Lebesgue measure in contrast to typical sets satisfying
\eqref{eq:thick-decay} with $\alpha>0$.

The operator considered in \cite{Miller-08} is not only a Schr\"{o}dinger operator, but at the same time a
Shubin operator, see Section \ref{sec:conjectures} for a definition and discussion of the latter class.
%For this reason we call them Shubin-Schr\''odinger operators.
In contrast, the methods of this paper do not require any integer condition on the power $\tau$ of the potential.

More precisely, our main new result is a spectral inequality for
Schr\"odinger operators $H = -\Delta + V$ with potentials $V$ of the following type.

\begin{assumption}{A}\label{ass:potential}
	Suppose that $V \in W^{1,\infty}_\loc(\RR^d)$ is such that
	\begin{enumerate}[(i)]
		\item for some $c_1, c_2 > 0$ and some $0 < \tau_1 \leq \tau_2$
		we have $c_1\abs{x}^{\tau_1} \leq V(x) \leq c_2\abs{x}^{\tau_2}$ for all
		$x \in \RR^d$;
		\item for some $\nu > 0$ we have
		\be\label{eq:Vgrowth}
			M_\nu
			:=
			\norm{\euler^{-\nu\abs{x}}\abs{\grad V}}_{L^\infty(\RR^d\setminus B(0,1))}
			<
			\infty
			.
		\ee
	\end{enumerate}
\end{assumption}

\begin{theorem}[Spectral inequality]\label{thm:spectral-inequality}
	Let $H = -\Delta + V$ with $V$ as in Assumption~\ref{ass:potential}, and let $\omega \subset \RR^d$ be measurable such that for
	some $\delta \in (0,1/2)$ and $\alpha \geq 0$ each intersection $\omega \cap (k + (-1/2,1/2)^d)$, $k \in \ZZ^d$, contains a ball
	of radius $\delta^{1+\abs{k}^\alpha}$.
	Then there is a constant $C > 0$ depending only on $\tau_1, \tau_2, c_1, c_2, \nu, M_\nu$,
	and the dimension $d$ such that
	\be\label{eq:spectral-inequality}
		\norm{f}_{L^2(\RR^d)}
		\leq
		\Bigl(\frac{1}{\delta}\Bigr)^{C^{1+\alpha}\cdot \lambda^{(\alpha+2\tau_2/3)/\tau_1}}
		\norm{f}_{L^2(\omega)}
	\ee
	for all $f \in \Ran P_\lambda(H)$, $\lambda \geq 1$.
\end{theorem}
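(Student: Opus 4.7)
The plan is to combine three ingredients matching the abstract's outline: a tail estimate for spectral functions coming from a quantitative weighted $L^2$-bound, a local Carleman-based interpolation inequality on each unit cube, and an $H^1$-concentration estimate that converts gradient terms into $L^2$ ones at the price of $\sqrt{\lambda}$. First I would tile $\RR^d$ by the unit cubes $C_k = k + (-1/2,1/2)^d$, $k\in\ZZ^d$, and split the problem into an ``inner'' block $\bigcup_{\abs{k}\leq R} C_k$ and its complement, with the cutoff $R\geq 1$ to be optimised at the end as a function of $\lambda$.

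For the outer region, I would first note that the quadratic form identity together with $V(x)\geq c_1\abs{x}^{\tau_1}$ forces $\langle f, Vf\rangle \leq \lambda\norm{f}_{L^2}^2$ and hence the crude decay $\norm{f}_{L^2(\abs{x}>R)}^2 \leq (\lambda/(c_1 R^{\tau_1}))\norm{f}_{L^2}^2$ for every $f\in\Ran P_\lambda(H)$. The quantitative weighted $L^2$-estimate announced in the abstract should upgrade this to Agmon-type exponential smallness, $\norm{\euler^{\phi} f}_{L^2}\leq C\norm{f}_{L^2}$, for a weight $\phi$ essentially growing like $\abs{x}^{1+\tau_1/2}$ outside a ball of radius $\sim\lambda^{1/\tau_1}$. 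This step is precisely where Assumption~\ref{ass:potential}(ii), i.e.\ the control of $\grad V$, enters, via commutator estimates inside the standard Agmon identity.

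On each inner cube $C_k$ I would then apply a Carleman-based three-ball / propagation-of-smallness inequality of the type used by Jerison--Lebeau and, in the context relevant here, by Nakić--Täufer--Tautenhahn--Veselić. Rewriting the equation as $-\Delta f = (\lambda - V)f + (H-\lambda)f$ and treating $V\restriction_{C_k}$ as a bounded potential of size $O(\abs{k}^{\tau_2})$, the Carleman estimate should give a bound of the shape $\norm{f}_{L^2(C_k)} \leq \euler^{c(\lambda+\abs{k}^{\tau_2})^{2/3}}\,\norm{f}_{L^2(B_k)}^\theta\,\norm{f}_{H^1(\widetilde C_k)}^{1-\theta}$, where $B_k\subset\omega\cap C_k$ is the ball of radius $\delta^{1+\abs{k}^\alpha}$ provided by the hypothesis, $\widetilde C_k$ is a slight enlargement, and $\theta\in(0,1)$ is essentially the logarithmic measure of $B_k$. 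The $H^1$-concentration bound $\norm{\grad f}_{L^2}\leq C\sqrt{\lambda}\,\norm{f}_{L^2}$, which follows directly from $\norm{\grad f}_{L^2}^2 \leq \langle f, Hf\rangle \leq \lambda\norm{f}_{L^2}^2$ and $V\geq 0$, then trades the $H^1$-norm above for an $L^2$-norm. Summing the local estimates over $\abs{k}\leq R$, absorbing the global term by the Young ghost inequality $a^\theta b^{1-\theta}\leq \theta a+(1-\theta)b$, and combining with the Agmon tail yields an inequality of the form $\norm{f}_{L^2(\RR^d)}\leq \delta^{-E(R,\lambda,\alpha)}\,\norm{f}_{L^2(\omega)}$.

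The main obstacle is the delicate parameter balance that produces precisely the exponent $\lambda^{(\alpha+2\tau_2/3)/\tau_1}$. The natural choice $R\sim\lambda^{1/\tau_1}$ kills the Agmon tail; on the inner cubes the worst Carleman prefactor sits at $\abs{k}\sim R$ and contributes of order $R^{2\tau_2/3}=\lambda^{2\tau_2/(3\tau_1)}$ to the exponent of $1/\delta$, while the geometric condition $\delta^{1+\abs{k}^\alpha}$ at $\abs{k}\sim R$ contributes an additional factor $R^{\alpha}=\lambda^{\alpha/\tau_1}$; the two multiply to the stated exponent, with the $2/3$ being the characteristic Donnelly--Fefferman/Jerison--Lebeau quantitative unique-continuation exponent. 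The heavy technical work --- tracking how all constants depend on $\abs{k}$ through both $V$ and $\grad V$ on $C_k$, calibrating the weighted $L^2$-estimate precisely to the power $\tau_1$, and keeping the $(1+\alpha)$-st power of the accumulated constants under control so that it yields the factor $C^{1+\alpha}$ in the final exponent --- is the routine but lengthy part.
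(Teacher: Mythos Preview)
Your overall architecture and the parameter accounting (choose $R\sim\lambda^{1/\tau_1}$; the Carleman prefactor contributes $\lambda^{2\tau_2/(3\tau_1)}$; the geometry contributes $\lambda^{\alpha/\tau_1}$) are exactly right, but there is a genuine gap in the Carleman step. A function $f\in\Ran P_\lambda(H)$ does \emph{not} satisfy a local elliptic equation: in your decomposition $-\Delta f=(\lambda-V)f+(H-\lambda)f$ the term $(H-\lambda)f$ is a nonlocal object for which you have only the global bound $\norm{(H-\lambda)f}_{L^2(\RR^d)}\le\lambda\norm{f}_{L^2(\RR^d)}$ and no control on an individual cube $C_k$, so a three-ball/propagation-of-smallness inequality cannot be applied to $f$ directly. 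The references you cite (Jerison--Lebeau, Naki\'c--T\"aufer--Tautenhahn--Veseli\'c) all resolve this via the \emph{ghost dimension}: one sets $F(\cdot,t)=\int s_t(\mu)\Diff{P_\mu(H)}f$ with $s_t(\mu)=\sinh(\sqrt\mu\,t)/\sqrt\mu$, so that $F$ satisfies the honest local PDE $-\Delta_{x,t}F+VF=0$ on $\RR^{d+1}$ with trace $\partial_tF|_{t=0}=f$. The Carleman interpolation is then run for $F$ on one large box $\Lambda\times(-1,1)$ with $\Lambda\supset B(0,C'\lambda^{1/\tau_1})$, treating $\omega\cap\Lambda$ as $\theta$-equidistributed with $\theta=\delta^{2(2\sqrt d\,C')^\alpha\lambda^{\alpha/\tau_1}}$ and using $\norm{V}_{L^\infty(\Lambda)}\lesssim\lambda^{\tau_2/\tau_1}$.

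A related point concerns how the loop closes. The paper's ``$H^1$-concentration estimate'' is not the trivial bound $\norm{\grad f}_{L^2}\le\sqrt\lambda\,\norm{f}_{L^2}$ you invoke, but the $H^1$ \emph{tail} bound $\norm{f}_{H^1(\RR^d\setminus B(0,C'\lambda^{1/\tau_1}))}^2\le\tfrac12\norm{f}_{L^2(\RR^d)}^2$ of Theorem~\ref{thm:decay-intro}. Applied to $F(\cdot,t)$ and $\partial_tF(\cdot,t)$ it gives $\norm{F}_{H^1(\RR^d\times(-1,1))}^2\le 2\norm{F}_{H^1(\Lambda\times(-1,1))}^2$; since the interpolation bounds $\norm{F}_{H^1(\Lambda\times(-1,1))}$ by $\norm{F}_{H^1(\RR^d\times(-R,R))}^{1-\kappa/2}\norm{f}_{L^2(\omega\cap\Lambda)}^{\kappa/2}$ and Lemma~\ref{lem:ghost} compares the two $H^1$-norms of $F$ up to $\euler^{K\sqrt\lambda}$, one can solve for $\norm{F}_{H^1}$ --- there is no Young-type absorption step. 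Proving the $H^1$ tail requires a weighted $L^2$ bound on $\grad f$, obtained by differentiating the eigenvalue equation for each eigencomponent; \emph{this} is where the control on $\grad V$ in Assumption~\ref{ass:potential}(ii) actually enters, namely to bound $\grad f_k$ rather than $f_k$ itself.
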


In the case $\alpha = 0$, the sets $\omega$ considered in Theorem~\ref{thm:spectral-inequality} correspond to
\emph{$\delta$-equidistributed} sets defined above.
More interesting and enlightening is the set
\be\label{eq:finite-measure}
	\omega
	=
	\bigcup_{k \in \ZZ^d} B(k,2^{-(1+\abs{k}^\alpha)})
	,
\ee
which has finite measure if $\alpha > 0$.
Moreover, if $\alpha < 1$, $\omega$ satisfies \eqref{eq:thick-decay} with $\rho = 1$ and suitably chosen $\gamma \in (0,1)$.

Note that inequality \eqref{eq:spectral-inequality}
is of the form \eqref{eq:abstract-spectral-inequality},
with the exponent $s$ satisfying $s = (\alpha+2\tau_2/3)/\tau_1 < 1$ if and only if $0 \leq \alpha < \tau_1-2\tau_2/3$.
Hence, if Assumption~\ref{ass:potential} holds and additionally $2\tau_2 < 3\tau_1$,
Theorem~\ref{thm:spectral-inequality} directly
leads to observability of the abstract Cauchy problem associated to $-H$.
More precisely, we get the following result using the variant of the Lebeau-Robbiano method spelled out in
\cite[Theorem~2.8]{NakicTTV-20}.

\begin{corollary}[Final state observability inequality]\label{cor:observability}
	Let $V$ be as in Assumption~\ref{ass:potential}, let $(e^{t(\Delta-V)})_{t\geq 0}$ be the $C_0$-semigroup with generator $\Delta - V$,
	and let $\omega$ be as in Theorem~\ref{thm:spectral-inequality} with
	$0\leq \alpha < \tau_1-2\tau_2/3$.
	
	Then, for all $T > 0$ and all $g\in L^2(\RR^d)$ we have
	\be\label{eq:observability}
		\norm{e^{T(\Delta-V)}g}_{L^2(\RR^d)}^2
		\leq
		C_\obs^2
		\int_0^T \norm{e^{t(\Delta-V)}g}_{L^2(\omega)}^2\Diff{t}
		.
	\ee
	The constant satisfies the asymptotics $C_\obs = C_\obs(T) = \mathcal{O}(T^{-1/2})$ as $T \to \infty$ and  the bound
\be\label{eq:Cobs}
	C_\obs^2
	\leq
	\frac{K}{T} (2d_0+1)^K \exp\Biggl[ K \left(\frac{d_1}{T^s}\right)^{1/(1-s)} \Biggr]
\ee
with $s=\frac{\alpha+2\tau_2/3}{\tau_1}$,
$d_1=-C^{1+\alpha}\ln\delta$, $d_0=\delta^{-C^{1+\alpha}}$,
$C=C(\tau_1, \tau_2, c_1, c_2, \nu, M_\nu,d)>0$ depending only on $\tau_1, \tau_2, c_1, c_2, \nu, M_\nu$, and the dimension $d$,
and $K=K(s)\geq 1$ depending only on $s$.
\end{corollary}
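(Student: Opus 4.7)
The plan is to view this as a direct application of the abstract Lebeau--Robbiano-type result \cite[Theorem~2.8]{NakicTTV-20}, feeding it the spectral inequality of Theorem~\ref{thm:spectral-inequality} together with the standard contractivity of the semigroup. No new analysis on the Schr\"odinger operator itself is required; the content of the corollary is essentially bookkeeping of constants.

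First, I would check that the abstract hypotheses are met. Since $V \geq c_1 |x|^{\tau_1} \geq 0$, the operator $H = -\Delta + V$ is nonnegative and selfadjoint, so $\Delta - V = -H$ generates a contraction $C_0$-semigroup on $L^2(\RR^d)$. Next, Theorem~\ref{thm:spectral-inequality} yields
\begin{equation*}
	\norm{f}_{L^2(\RR^d)}
	\leq
	\delta^{-C^{1+\alpha}\lambda^s}\norm{f}_{L^2(\omega)}
	\leq
	d_0\,\euler^{d_1\lambda^s}\norm{f}_{L^2(\omega)}
	,
	\quad
	f \in \Ran P_\lambda(H),\ \lambda \geq 1,
\end{equation*}
with $s = (\alpha + 2\tau_2/3)/\tau_1$, $d_0 = \delta^{-C^{1+\alpha}}$ and $d_1 = -C^{1+\alpha}\ln\delta > 0$. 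For $\lambda \in [0,1)$ we use $\Ran P_\lambda(H) \subset \Ran P_1(H)$ to get the same bound for free (absorbing it into $d_0$), so \eqref{eq:abstract-spectral-inequality} holds on all of $[0,\infty)$. The crucial hypothesis $s < 1$ is precisely $\alpha < \tau_1 - 2\tau_2/3$, which is built into the statement.

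Now I would invoke \cite[Theorem~2.8]{NakicTTV-20}: for a nonnegative selfadjoint operator admitting a spectral inequality with exponent $s<1$ and constants $(d_0,d_1)$ as above, the associated semigroup satisfies the final-state observability inequality \eqref{eq:observability} from $\omega$ for every $T>0$, with an explicit bound on $C_\obs^2$ of the form \eqref{eq:Cobs}, where $K = K(s) \geq 1$ depends only on $s$. Plugging in our identification of $d_0$, $d_1$, and $s$ gives exactly the stated bound.

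Finally, the asymptotic $C_\obs(T) = \mathcal{O}(T^{-1/2})$ as $T\to\infty$ follows by reading off \eqref{eq:Cobs}: since $s \in (0,1)$, the term $(d_1/T^s)^{1/(1-s)}$ tends to $0$ as $T \to \infty$, so the exponential factor is bounded by a constant for large $T$; the remaining prefactor $K(2d_0+1)^K/T$ is $\mathcal{O}(T^{-1})$, hence $C_\obs = \mathcal{O}(T^{-1/2})$. The only ``obstacle'' is keeping the constants aligned with the normalization in \cite{NakicTTV-20}, in particular making sure that the spectral inequality is extended down to $\lambda = 0$ so that the hypothesis of that theorem is literally satisfied; everything else is a direct invocation.
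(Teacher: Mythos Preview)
Your proposal is correct and matches the paper's approach exactly: the corollary is stated as a direct consequence of feeding the spectral inequality of Theorem~\ref{thm:spectral-inequality} into the abstract Lebeau--Robbiano machinery of \cite[Theorem~2.8]{NakicTTV-20}, with no further analysis beyond identifying the constants $s$, $d_0$, $d_1$. Your extra care in extending the spectral inequality to $\lambda \in [0,1)$ and reading off the $T\to\infty$ asymptotics is appropriate bookkeeping that the paper leaves implicit.
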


\begin{remark}[Bound on the observability constant]\label{rem:observability_constant}
For general potentials satisfying Assumption~\ref{ass:potential} the bound \eqref{eq:Cobs} seems rather complicated.
Let us concentrate on the question, how $C_\obs$ depends on the time parameter $T$ and specialize to potentials of the form
$V(x)=\abs{x}^\tau$ (for some $\tau >3\alpha$). The bound \eqref{eq:Cobs} on the observability constant $C_\obs$ in \eqref{eq:observability}
can then be rewritten as
	\[
		C_\obs^2 \leq \frac{D}{T} \exp\Biggl(\frac{D}{T^{1+ \frac{2\alpha+\tau/3}{\tau/3-\alpha}}} \Biggr)
		\quad\text{for all}\quad T > 0
	\]
	where $D > 0$ is a constant that depends only on $\tau, \alpha$, $\delta$, and the dimension.
\end{remark}

Let us briefly comment on the hypotheses in Assumption~\ref{ass:potential}:
The lower bound in part~(i) allows to
bound the eigenvalue counting function for $H$, cf.\ \eqref{eq:boundN} below.
It also implies
a suitable $L^2$-decay for eigenfunctions of $H$,
see Proposition~\ref{prop:weighted-est-eigenfunction} below.
The bound in part~(ii) allows to obtain a similar decay for partial
derivatives of eigenfunctions by differentiating the eigenvalue equation $Hf = \lambda f$, which introduces partial derivatives
of the potential to the equation, see Proposition~\ref{prop:weighted-est-gradient-eigenfunction} below. Together with the bound
on the eigenvalue counting function, this amounts to the fact that the $H^1$-mass of $f\in \Ran P_\lambda(H)$ is strongly
localized. This is made precise in the following theorem.

\begin{theorem}\label{thm:decay-intro}
	Let $H = -\Delta + V$ with $V$ as in Assumption~\ref{ass:potential}.
	Then there is a constant $C' > 0$, depending only on $\tau_1,c_1,\nu, M_\nu$, and the dimension $d$,
	such that for all $\lambda \geq 1$ and all $f \in \Ran P_\lambda(H)$ we have
	\be\label{eq:localization-confinement}
		\norm{f}_{H^1(\RR^d\setminus B(0,C'\lambda^{1/\tau_1}))}^2
		\leq
		\frac{1}{2}\norm{f}_{L^2(\RR^d)}^2
		.
	\ee
\end{theorem}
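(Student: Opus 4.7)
Since the lower bound $V(x) \geq c_1 \abs{x}^{\tau_1}$ in Assumption~\ref{ass:potential}(i) forces $V \to \infty$ at infinity, $H$ has compact resolvent. Let $(\phi_k)_{k \in \NN}$ be an orthonormal basis of $L^2(\RR^d)$ consisting of eigenfunctions $H\phi_k = \mu_k\phi_k$ with $0\leq\mu_1\leq\mu_2\leq\dots$, and expand any $f \in \Ran P_\lambda(H)$ as $f = \sum_{\mu_k \leq \lambda} c_k \phi_k$ with $\sum |c_k|^2 = \norm{f}_{L^2}^2$. The plan is to reduce the $H^1$-tail of $f$ to a sum of $H^1$-tails of individual eigenfunctions, bound each one by a weighted $L^2$-estimate of eigenfunctions and their gradients, and close the estimate by the eigenvalue counting bound.

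Applying the pointwise Cauchy--Schwarz inequality to the finite sum gives $\abs{f(x)}^2 \leq \norm{f}_{L^2}^2 \sum_{\mu_k\leq\lambda}\abs{\phi_k(x)}^2$ and, since $\nabla f = \sum_k c_k\nabla\phi_k$, also $\abs{\grad f(x)}^2 \leq \norm{f}_{L^2}^2 \sum_{\mu_k\leq\lambda}\abs{\grad\phi_k(x)}^2$. Integrating over $\RR^d\setminus B(0,R)$ therefore yields
\[
	\norm{f}_{H^1(\RR^d\setminus B(0,R))}^2
	\leq
	\norm{f}_{L^2}^2 \sum_{\mu_k\leq\lambda}\norm{\phi_k}_{H^1(\RR^d\setminus B(0,R))}^2.
\]
For each $\phi_k$ with $\mu_k\leq\lambda$, Proposition~\ref{prop:weighted-est-eigenfunction} produces a weighted $L^2$-bound of the form $\norm{\euler^{\eta\abs{\cdot}}\phi_k}_{L^2}^2 \leq \euler^{C_1\lambda^{1/\tau_1}}$ for some $\eta>0$. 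Differentiating the eigenvalue equation introduces a term $\grad V$, which is controlled through the bound \eqref{eq:Vgrowth}; Proposition~\ref{prop:weighted-est-gradient-eigenfunction} then supplies the matching bound $\norm{\euler^{\eta\abs{\cdot}}\grad\phi_k}_{L^2}^2 \leq \euler^{C_2\lambda^{1/\tau_1}}$, where $C_1,C_2,\eta$ depend only on $\tau_1,c_1,\nu,M_\nu,d$. Restricting the weight to $\{\abs{x}\geq R\}$ gives $\norm{\phi_k}_{H^1(\RR^d\setminus B(0,R))}^2 \leq \euler^{-2\eta R}\,\euler^{C_3\lambda^{1/\tau_1}}$.

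The remaining ingredient is the bound referenced by \eqref{eq:boundN}: the lower bound $V(x)\geq c_1\abs{x}^{\tau_1}$ entails a polynomial bound $N(\lambda):=\#\{k:\mu_k\leq\lambda\} \leq C_4\lambda^{\kappa}$, with $\kappa$ depending only on $d,\tau_1$. Inserting everything,
\[
	\norm{f}_{H^1(\RR^d\setminus B(0,R))}^2
	\leq
	C_4\lambda^{\kappa}\,\euler^{C_3\lambda^{1/\tau_1}-2\eta R}\norm{f}_{L^2}^2.
\]
Choosing $R=C'\lambda^{1/\tau_1}$ with $C'$ large enough that $2\eta C' - C_3 \geq \kappa+\log(2C_4)$ makes the right-hand side bounded by $\tfrac12\norm{f}_{L^2}^2$ for every $\lambda\geq 1$, which is the claim.

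The main obstacle lies in the second step: one must establish the weighted gradient estimate of Proposition~\ref{prop:weighted-est-gradient-eigenfunction} with a decay rate $\eta$ that is \emph{independent of the eigenvalue} $\mu_k\leq\lambda$, so that the factor $\euler^{-2\eta R}$ can beat both the prefactor $\euler^{C_3\lambda^{1/\tau_1}}$ coming from the subexponential growth of the weighted norms and the polynomial factor $N(\lambda)$ at the threshold $R\asymp\lambda^{1/\tau_1}$. This is precisely where the condition \eqref{eq:Vgrowth} on $\grad V$ enters and where the parameters $\nu$ and $M_\nu$ appear in $C'$.
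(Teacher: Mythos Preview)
Your argument is correct and follows essentially the same route as the paper: expand $f$ in eigenfunctions, use Cauchy--Schwarz to reduce to a sum of weighted $H^1$-tails of individual eigenfunctions, invoke Propositions~\ref{prop:weighted-est-eigenfunction} and~\ref{prop:weighted-est-gradient-eigenfunction} for those, and absorb the eigenvalue count $N(\lambda)$ via the polynomial bound~\eqref{eq:boundN}. The only cosmetic difference is that the paper expands along the distinct eigenspace projections $f_k=\indic_{\{\lambda_k\}}(H)f$ rather than an orthonormal eigenbasis, and your stated threshold condition on $C'$ should strictly include a factor accounting for $\log\lambda\lesssim_{\tau_1}\lambda^{1/\tau_1}$, but this is routine bookkeeping.
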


This result enables us essentially to reduce the considerations to a sufficiently large cube $\Lambda= (-L,L)^d$ in $\RR^d$
by a `cut-off procedure' (cf.~Remark~\ref{rem:Carleman} below).
At this point the upper bound in part~(i) of Assumption~\ref{ass:potential} comes into play and ensures an effective bound on $V$.
More precisely, $\|V\|_{\infty,\Lambda}:=\sup_{x \in\Lambda} V(x) \leq  c_2 d^{\tau_2/2} \, L^{\tau_2}$.
\medskip

We now discuss under which conditions on the parameter $\alpha$ (encoding the decay rate)
the spectral inequality provided by Theorem~\ref{thm:spectral-inequality}
is of the type \eqref{eq:abstract-spectral-inequality} and hence usable as input for the observability inequality
in Corollary~\ref{cor:observability}.

To this end, let us focus on the harmonic oscillator, that is, $H = -\Delta + V$ with $V(x) = \abs{x}^2$.
This operator meets the hypotheses of Theorem~\ref{thm:spectral-inequality} (with $\tau_1 = \tau_2 = 2$, $c_1 = c_2 = 1$).
Furthermore, condition $0\leq \alpha < \tau_1-2\tau_2/3$ required for $s \in (0,1)$ in  \eqref{eq:abstract-spectral-inequality},
simplifies to $0\leq \alpha < 2-4/3=2/3$.
By contrast, \cite{DickeSV-23} derives Inequality~\eqref{eq:abstract-spectral-inequality} merely under the condition $\alpha < 1$.
Let us further restrict our considerations to the case $\alpha=0$, i.\,e.~that $\omega$ is a thick set.
Then the exponent on the right-hand side of \eqref{eq:spectral-inequality} reads
$C \lambda^{2/3}$, while the corresponding term in \cite{DickeSV-23} is $C \lambda^{1/2}$
which is the expected behaviour, see for instance the discussion on page~14 in \cite{Kovrijkine-thesis} or in Section~3 of \cite{EgidiV-20}.
The slightly worse behaviour in our Theorem~\ref{thm:spectral-inequality} above is
due to the application of a Carleman estimate after the mentioned `cut-off procedure' to the cube $\Lambda= (-L,L)^d$.
While according to a conjecture associated with the name of Landis, the sup norm of the potential should
enter the unique continuation estimate in the scaling $\|V\|_{\infty,\Lambda}^{1/2}$
the current technology based on Carleman estimates yields the scaling $\|V\|_{\infty,\Lambda}^{2/3}$.
Tracing this suboptimality further leads to the suboptimal condition on the exponent $\alpha$.
See also the discussion in the next section.
\medskip

Let us finally mention that our results can also be extended to potentials $V\colon \RR^d = \RR^{d_1}\times\RR^{d_2} \to [0,\infty)$
which grow only in certain coordinate directions.
For simplicity, we here restrict ourselves to potentials $V(x) = \abs{x_1}^\tau$ with $x = (x_1,x_2) \in \RR^{d_1}\times\RR^{d_2}$ and $\tau > 0$.
The more general analogue to Theorem~\ref{thm:spectral-inequality} is presented in Theorem~\ref{thm:anisotropic_V} below.

\begin{theorem}\label{thm:anisotropic_V_easy}
	Let $H = -\Delta + \abs{x_1}^\tau$, and let $\omega \in \RR^d$ be measurable such that for some
	$\delta \in (0,1/2)$ and $\alpha \geq 0$ each intersection $\omega \cap (k + (-1/2,1/2)^d)$, $k = (k_1,k_2) \in \ZZ^d$, contains
	a ball of radius $\delta^{1+\abs{k_1}^\alpha}$.
	Then there is a constant $C > 0$ depending only on the dimension $d$ and the parameter $\tau$
	such that for all $\lambda \geq 1$ and all $f \in \Ran P_\lambda(H)$ we have
	\bes
		\norm{f}_{L^2(\RR^d)}
		\leq
		\Bigl(\frac{1}{\delta}\Bigr)^{C^{1+\alpha}\cdot \lambda^{\alpha/\tau+2/3}}
		\norm{f}_{L^2(\omega)}
		.
	\ees
\end{theorem}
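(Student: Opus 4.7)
The plan is to adapt the three ingredients used in the proof of Theorem~\ref{thm:spectral-inequality}---the weighted $L^2$-estimates for eigenfunctions, the resulting $H^1$-concentration, and the Carleman-based interpolation inequality---to the anisotropic situation in which the potential $V(x)=\abs{x_1}^\tau$ confines only in the $x_1$-directions. Formally the parameters of Assumption~\ref{ass:potential} correspond to $\tau_1=\tau_2=\tau$, and the target exponent $\lambda^{\alpha/\tau+2/3}$ matches the specialisation of \eqref{eq:spectral-inequality} to that case; the actual work is to push the argument through without any confinement in $x_2$.

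First, I would establish the anisotropic analogue of Theorem~\ref{thm:decay-intro}: for $f\in\Ran P_\lambda(H)$ and $L\sim\lambda^{1/\tau}$,
\bes
\norm{f}_{H^1(\{\abs{x_1}\geq L\}\times\RR^{d_2})}^2 \leq \tfrac{1}{2}\norm{f}_{L^2(\RR^d)}^2.
\ees
The cleanest route is a partial Fourier transform in $x_2$: since $H=H_1\otimes I+I\otimes(-\Delta_{x_2})$ with $H_1=-\Delta_{x_1}+\abs{x_1}^\tau$ having compact resolvent, the partial Fourier transform $\hat f(\cdot,\xi_2)$ lies in $\Ran P_{\lambda-\abs{\xi_2}^2}(H_1)$ and vanishes for $\abs{\xi_2}^2>\lambda$. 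Applying the $d_1$-dimensional versions of Propositions~\ref{prop:weighted-est-eigenfunction} and~\ref{prop:weighted-est-gradient-eigenfunction} to each fiber, with the one-sided Agmon weight $\euler^{\sigma\abs{x_1}}$ (note that $\grad V$ vanishes in the $x_2$-block), and then integrating in $\xi_2$ via Plancherel yields the claimed concentration.

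Second, using the cut-off procedure of Remark~\ref{rem:Carleman}, I would reduce to the slab $\Lambda_L=(-L,L)^{d_1}\times\RR^{d_2}$, on which $\norm{V}_{L^\infty(\Lambda_L)}\leq L^\tau\lesssim\lambda$, and then apply the Carleman-based interpolation estimate as in the isotropic proof. In the resulting Kovrijkine-style iteration over the unit cubes $k+(-1/2,1/2)^d$, only those with $\abs{k_1}\lesssim L$ contribute in the $x_1$-directions, so the exponent $1+\abs{k_1}^\alpha$ entering the sensor-set condition is bounded by $1+L^\alpha\lesssim\lambda^{\alpha/\tau}$. Combined with the $\norm{V}_\infty^{2/3}\sim\lambda^{2/3}$ scaling of the Carleman step, this produces the final exponent $C^{1+\alpha}\lambda^{\alpha/\tau+2/3}$.

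The main obstacle is the unbounded $x_2$-directions: the cut-off only yields a slab rather than a bounded cube, so the local-to-global Kovrijkine summation runs over infinitely many unit cubes in $x_2$. The remedy is that the sensor-set density depends only on $\abs{k_1}$ and is hence uniform in $k_2$; for each fixed $k_1$ the standard Logvinenko--Sereda/Kovrijkine summation in the $x_2$-directions therefore applies verbatim with the constant thick-set radius $\delta^{1+\abs{k_1}^\alpha}$, and summing over the finitely many relevant $k_1$ with $\abs{k_1}\lesssim L$ completes the argument.
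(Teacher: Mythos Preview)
Your proposal is correct and matches the paper's approach closely: the paper reduces to the $d_1$-dimensional concentration (Theorem~\ref{thm:decay-intro} applied to $H_1$) via the tensor decomposition $h=\sum_k\phi_k\otimes\psi_k$ with $\phi_k\in\Ran P_\lambda(H_1)$ rather than via your partial Fourier transform in $x_2$, but these are equivalent ways of slicing along the $x_2$-directions, and the remaining steps are the same. One terminological caveat: the infinite $k_2$-summation on the slab is absorbed by the Carleman-based covering argument of \cite{NakicTTV-20a}, which already handles unbounded domains, not by a separate Logvinenko--Sereda/Kovrijkine estimate as your last paragraph suggests---the uniformity of the sensor radii in $k_2$ that you correctly identify is exactly what makes that covering go through unchanged.
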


The rest of this note is organized as follows.
The purpose of Section~\ref{sec:conjectures} is threefold:
We consider some related models, in particular Shubin operators,
formulate conjectures, that have been spelled out in the first version of this manuscript, and finally discuss papers of other authors, which have been
obtained during the refereeing procedure of this manuscript,
actually confirming the conjectures.
%We spell out these new results and put everything in the context.
Based on \cite{GagelmanY-12}, Section~\ref{sec:decay} discusses decay
properties of eigenfunctions and establishes the $H^1$-localization for elements $f \in \Ran P_\lambda(H)$,
i.\,e.~Theorem~\ref{thm:decay-intro}.
Section~\ref{sec:proof} then revisits the proofs of the spectral
inequalities developed in \cite{NakicTTV-18} and improved in \cite{NakicTTV-20a,DickeRST-20} and adapts them towards a proof of
Theorem~\ref{thm:spectral-inequality}.
Finally, in Section~\ref{sec:tensor} we discuss how to extend our results to partial confinement potentials
based on a tensorization technique.

In all these considerations, we frequently write $A \lesssim B$ with quantities $A$ and
$B$ if there is a constant $c > 0$ depending on the model parameters such that $A \leq c B$. If the constant depends only
on a subset of the model parameters, we occasionally write these parameters in the subscript of $\lesssim$, for instance,
$A \lesssim_d B$ if the constant only depends on the dimension $d$.

\subsection*{Acknowledgments}
A.D. and A.S. have been partially supported by the DFG grant VE 253/10-1 entitled
\emph{Quantitative unique continuation properties of elliptic PDEs with variable
2nd order coefficients and applications in control theory, Anderson localization,
and photonics}.

\section{Conjectures, generalizations and further developments} \label{sec:conjectures}
In this section, we discuss some related models,
in particular Shubin operators, and formulate some conjectures
that were spelled out in our original preprint.
In fact, recent papers obtain more general results than ours and mostly confirm our original conjectures,
in some cases quoting them as motivation.
Let us discuss this properly.

We expected that a (partial) improvement of Theorem~\ref{thm:spectral-inequality} of the following form is valid.

\begin{conconjecture}\label{conj:general-power}
	Let $\tau > 0$, and suppose that $\omega \subset\RR^d$ is measurable such that for
	some $\delta \in (0,1/2)$ and $\alpha \geq 0$ each intersection $\omega \cap (k + (-1/2,1/2)^d)$, $k \in \ZZ^d$, contains a ball
	of radius $\delta^{1+\abs{k}^\alpha}$.
	Then there is a constant $C > 0$ depending only on
	$\tau$, $\delta$, $\alpha$, and the dimension $d$ such that for all $\lambda \geq 1$ and all $f \in \Ran P_\lambda(-\Delta+\abs{x}^\tau)$ we have
	\bes
		\norm{f}_{L^2(\RR^d)}
		\leq
		\Bigl(\frac{1}{\delta}\Bigr)^{C\cdot \lambda^{\frac{\alpha}{\tau}+\frac{1}{2}}}
		\norm{f}_{L^2(\omega)}
		.
	\ees
\end{conconjecture}

\newpage

In fact, recently \cite{ZhuZ-23} confirmed the conjecture considering even more general potentials of the following type:

\begin{assumption}{B}\label{ass:ZZ}
	Suppose that $V= V_1+V_2$ with   $V_1 \in W^{1,\infty}_{\mathrm{loc}}(\RR^d)$ and $V_2 \in L^{\infty}_{\mathrm{loc}}(\RR^d)$
satisfies
	\begin{enumerate}[(i)]
      \item for some $\tau_1, c_1> 0$ we have
    \[
    c_1(\abs{x}-1)_+^{\tau_1} \leq V(x) \quad \text{  for all } x \in \RR^d \text{ and }
    \]
		\item for some $\tau_2, c_2> 0$ we have
		\[
       |V_1(x)|+ |\nabla V_1(x)|+ |V_2(x)|^{4/3}\leq c_2(\abs{x}+1)^{\tau_2}  \quad \text{  for all } x \in \RR^d.
		\]
	\end{enumerate}
\end{assumption}

\begin{theorem}[Theorem~1 in \cite{ZhuZ-23}]\label{thm:spectral-inequality-ZZ}
	Let $H = -\Delta + V$ with $V$ as in Assumption~\ref{ass:ZZ}, and let $\omega \subset \RR^d$ be measurable such that for
	some $\delta \in (0,1/2)$ and $\alpha \geq 0$ each intersection $\omega \cap (k + (-1/2,1/2)^d)$, $k \in \ZZ^d$, contains a ball
	of radius $\delta^{1+\abs{k}^\alpha}$.
	Then there is a constant $C > 0$ depending only on $\tau_1, \tau_2, c_1, c_2,\alpha$, 	and the dimension $d$ such that
	\[
		\norm{f}_{L^2(\RR^d)}
		\leq
		\Bigl(\frac{1}{\delta}\Bigr)^{C\cdot \lambda^{(\alpha/\tau_1) +(\tau_2/2\tau_1)}}
		\norm{f}_{L^2(\omega)}
	\]
	for all $f \in \Ran P_\lambda(H)$.
\end{theorem}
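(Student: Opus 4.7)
The plan is to follow the overall architecture behind Theorem~\ref{thm:spectral-inequality}, but to replace the local unique continuation input by a sharper variant that scales as $\|V\|_{\infty,\Lambda}^{1/2}$ rather than $\|V\|_{\infty,\Lambda}^{2/3}$. The lower bound in Assumption~\ref{ass:ZZ}(i) is essentially the one in Assumption~\ref{ass:potential}(i), so the reasoning of Section~\ref{sec:decay} carries over and produces the same $H^1$-concentration of $f \in \Ran P_\lambda(H)$ on a cube $\Lambda = (-L,L)^d$ with $L \lesssim \lambda^{1/\tau_1}$. On $\Lambda$, the growth hypothesis~(ii) yields the pointwise bounds $\|V_1\|_{\infty,\Lambda} + \|\grad V_1\|_{\infty,\Lambda} \lesssim L^{\tau_2}$ and $\|V_2\|_{\infty,\Lambda}^{4/3} \lesssim L^{\tau_2}$; the exponent $4/3$ on $V_2$ is tailor-made so that both contributions are consistent with the Landis scaling.

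Next I would assemble a local spectral inequality on each unit cube $Q_k = k + (-1/2,1/2)^d$ with $|k| \leq L$. Writing $f = \sum_{E_n \leq \lambda} c_n \phi_n$, one has $(-\Delta + V)f = Hf$ with $\|Hf\|_{L^2} \leq \lambda \|f\|_{L^2}$; either treating $Hf$ as a ghost inhomogeneity, or, in the spirit of Jerison--Lebeau, extending $f$ to $F(x,y) = \cosh(y \sqrt{H}\,)f(x)$ so that $F$ solves $(-\Delta_x - \partial_y^2 + V)F = 0$ in a slab $Q_k \times (-1,1)$, the question reduces to a quantitative unique continuation estimate for $-\Delta + V_1 + V_2$. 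Applying a Bourgain--Kenig / Meshkov-type Carleman estimate together with the propagation-of-smallness technique already used in \cite{NakicTTV-20a,DickeRST-20}, one obtains
\[
\|f\|_{L^2(Q_k)} \leq \Bigl(\frac{1}{r_k}\Bigr)^{C\bigl(1 + \|V_1\|_{\infty,\widetilde{Q}_k}^{1/2} + \|V_2\|_{\infty,\widetilde{Q}_k}^{2/3}\bigr)} \|f\|_{L^2(B_{r_k})},
\]
where $B_{r_k}$ is the sensor ball of radius $r_k = \delta^{1+|k|^\alpha}$ guaranteed by hypothesis and $\widetilde{Q}_k$ is a slight enlargement of $Q_k$. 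Substituting $\|V_1\|_\infty^{1/2} + \|V_2\|_\infty^{2/3} \lesssim L^{\tau_2/2} \lesssim \lambda^{\tau_2/(2\tau_1)}$ and $1+|k|^\alpha \lesssim \lambda^{\alpha/\tau_1}$, the local exponent contracts to $C\lambda^{\alpha/\tau_1 + \tau_2/(2\tau_1)}$, exactly the target rate.

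Summing these local estimates over the $O(L^d)$ unit cubes in $\Lambda$ via the Kovrijkine/\cite{NakicTTV-20} geometric covering argument, and absorbing the $H^1$-tail on $\RR^d\setminus\Lambda$ via Theorem~\ref{thm:decay-intro}, yields the claimed global inequality.

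The main obstacle is extracting the Landis scaling $\|V\|_\infty^{1/2}$ from the Carleman estimate: the Escauriaza--Vessella--Fefferman weights underlying Section~\ref{sec:proof} of the present paper only deliver the weaker $\|V\|_\infty^{2/3}$, and improving this requires the more delicate weights and radial/angular analysis in the spirit of Meshkov's counterexample and Bourgain--Kenig's work on Anderson localization. A secondary, bookkeeping challenge is keeping all Carleman constants uniform in the cube index $k$ across the $O(L^d)$ cubes inside $\Lambda$, so that the exponents combine cleanly into the additive form $\alpha/\tau_1 + \tau_2/(2\tau_1)$; the split hypothesis $V = V_1 + V_2$ with the distinguished powers $1$ and $4/3$ is precisely what makes this bookkeeping possible without losing the Landis rate.
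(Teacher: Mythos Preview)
Your overall architecture is correct and matches what the paper reports about the Zhu--Zhuge argument: Theorem~\ref{thm:spectral-inequality-ZZ} is only \emph{cited} here, not proved, but the paper states that its proof follows the same scheme as Theorem~\ref{thm:spectral-inequality}, with the concentration step of Section~\ref{sec:decay} carrying over under Assumption~\ref{ass:ZZ}(i) and the entire gain residing in the local unique continuation input on the cube $\Lambda$.

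The genuine gap is in the mechanism you propose for the scaling $\|V_1\|_\infty^{1/2}$. You attribute it to ``Bourgain--Kenig / Meshkov-type'' Carleman weights, but this is backwards: those techniques deliver precisely the $\|V\|_\infty^{2/3}$ scaling already used in this paper, and Meshkov's construction is a \emph{counterexample} showing $2/3$ is sharp for general complex bounded potentials. Extracting the Landis exponent $1/2$ from the sup norm alone is open in dimension $d \geq 3$. What actually makes the improvement work is the \emph{gradient bound} $|\nabla V_1| \lesssim (1+|x|)^{\tau_2}$ in Assumption~\ref{ass:ZZ}(ii): for Lipschitz potentials a Carleman estimate scaling like $(\|V_1\|_\infty + \|\nabla V_1\|_\infty)^{1/2}$ \emph{is} available, and this is exactly why the hypothesis splits $V = V_1 + V_2$ with $V_1 \in W^{1,\infty}_{\loc}$ and controls $|\nabla V_1|$ rather than just $|V_1|$. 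You record the gradient bound early on but then drop it from the Carleman step, so your displayed local estimate with bare $\|V_1\|_\infty^{1/2}$ is, as written, not a known inequality. The paper also singles out Cacciopoli inequalities as a second crucial technical ingredient in \cite{ZhuZ-23}; these do not appear in your sketch either.
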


The proof of \cite{ZhuZ-23} follows the same line of argument as ours but develops several crucial technical improvements,
among them a Carleman estimate using a bound on the gradient of the potential, not just the sup norm of the potential itself,
and Cacciopoli inequalities.
We recommend the reader interested in the currently best obtainable estimates in our context to consult their proofs.

\cite{ZhuZ-23} in turn motivated the study of null-controllability of the heat equation associated to
fractional Baouendi-Grushin operators in \cite{JamingW-23}.
The paper contains several related results. We quote a particular case of one of them as an example that considers potentials of the following class:

\begin{assumption}{C}\label{ass:JW}
Suppose that $V= V_1+V_2$ with   $V_1 \in W^{1,\infty}_{\mathrm{loc}}(\RR^d)$ and $V_2 \in L^{\infty}_{\mathrm{loc}}(\RR^d)$
satisfies for some $\tau, c_1, c_2> 0$
	\begin{enumerate}[(i)]
      \item
    \[
    c_1\abs{x}^{\tau} \leq V(x) \quad \text{  for all } x \in \RR^d \text{ and }
    \]
		\item
		\[
       |V_1(x)|+ |\nabla V_1(x)|+ |V_2(x)|^{4/3}\leq c_2(\abs{x}+1)^{\tau}  \quad \text{  for all } x \in \RR^d.
		\]
	\end{enumerate}
\end{assumption}

\begin{theorem}[Cf.~Theorem~1.7 in \cite{JamingW-23}]\label{thm:null-contollability-JW}
Let $\delta \in (0,1/2)$ and $V$ be as in Assumption~\ref{ass:JW}.
Set $\theta_*:= \frac{\tau+2}{3}$.
Let $\omega \subset \RR^d$ be a $\delta$-equidistributed set.
Then for every initial condition $\phi_0 \in L^2(\RR^d\times \RR^m)$ the problem
\begin{align*}
\partial_t \phi(t,x,y) + \left( -\Delta_x-V(x)\Delta_y\right)^\theta \phi(t,x,y)
&=   (\indic_{\omega \times \RR^m} \cdot u)(t,x,y),
&t >0, x \in \RR^d, y \in \RR^m,
\\
 \phi(0,x,y) &=  \phi_0(x,y) , &x \in \RR^d, y \in \RR^m.
\end{align*}
is final state null-controllable in every time $T>0$, if $\theta>\theta_*$.
Furthermore, there exists a time $T_* >0$ such that
for $\theta=\theta_*$ the above problem is final state null-controllable
provided $T>T_*$. For $\delta \searrow 0$, the time $T_*$ diverges like $\ln\left(\frac{1}{\delta}\right)$.
\end{theorem}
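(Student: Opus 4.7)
The plan is to establish a quantitative spectral inequality for $A := -\Delta_x - V(x)\Delta_y$ on $L^2(\RR^{d+m})$ with respect to the sensor set $\omega\times\RR^m$, of the form
\begin{equation*}
	\norm{f}_{L^2(\RR^{d+m})} \leq D_0\,\exp\bigl(D_1\Lambda^{s_*}\bigr)\,\norm{f}_{L^2(\omega\times\RR^m)},\qquad f\in\Ran P_\Lambda(A),
\end{equation*}
with the critical exponent $s_*=\theta_*=(\tau+2)/3$, and then to pass to the fractional power $A^\theta$. Since $P_\Lambda(A^\theta)=P_{\Lambda^{1/\theta}}(A)$ for the nonnegative selfadjoint $A$, the same inequality yields a spectral inequality for $A^\theta$ with $\Lambda^{s_*}$ replaced by $\Lambda^{s_*/\theta}$. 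Feeding this into \cite[Theorem~2.8]{NakicTTV-20} produces final-state null-controllability in arbitrary time $T>0$ whenever $s_*/\theta<1$, that is, $\theta>\theta_*$. In the borderline case $\theta=\theta_*$ the Lebeau-Robbiano telescoping series only converges once $T$ exceeds a threshold $T_*$ dictated by the $\delta$-dependence of $D_0,D_1$, and one then expects $T_*$ to be of order $\log(1/\delta)$.

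For the spectral inequality for $A$ itself, I would diagonalise the $y$-dependence via the partial Fourier transform $\mathcal{F}_y$: writing $\hat f(x,\eta):=(\mathcal{F}_y f)(x,\eta)$, one has $A = \int_{\RR^m}^{\oplus}\!A_\eta\,\Diff{\eta}$ with fibres $A_\eta := -\Delta_x + |\eta|^2 V(x)$ on $L^2(\RR^d)$, so that $f\in\Ran P_\Lambda(A)$ if and only if $\hat f(\cdot,\eta)\in\Ran P_\Lambda(A_\eta)$ for almost every $\eta\in\RR^m$. Plancherel in $y$ gives
\begin{equation*}
	\norm{f}_{L^2(\RR^{d+m})}^2 = \int_{\RR^m}\!\norm{\hat f(\cdot,\eta)}_{L^2(\RR^d)}^2\Diff{\eta},\qquad \norm{f}_{L^2(\omega\times\RR^m)}^2 = \int_{\RR^m}\!\norm{\hat f(\cdot,\eta)}_{L^2(\omega)}^2\Diff{\eta},
\end{equation*}
so it suffices to prove a spectral inequality for the fibres $A_\eta$ whose constant is \emph{uniform} in $\eta$.

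For each $\eta$ the potential $|\eta|^2 V$ still satisfies Assumption~\ref{ass:JW} up to rescaled constants, so Theorem~\ref{thm:spectral-inequality-ZZ} applies to $A_\eta$; the main obstacle is tracking the $|\eta|$-dependence of the resulting exponent. The natural device is the scaling $x=\mu z$ with $\mu=|\eta|^{-2/(\tau+2)}$, which identifies $A_\eta$ unitarily with $|\eta|^{4/(\tau+2)}\tilde H$ for a canonically normalised Schr\"odinger operator $\tilde H = -\Delta_z + \tilde V(z)$ of fixed type (in particular, $\Ran P_\Lambda(A_\eta)=\{0\}$ for $|\eta|\gtrsim\Lambda^{(\tau+2)/4}$), while turning the sensor set $\omega$ into $\omega/\mu$, a set that is $\delta$-equidistributed on scale $1/\mu = |\eta|^{2/(\tau+2)}$. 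Applying Theorem~\ref{thm:spectral-inequality-ZZ} on this new scale with eigenvalue parameter $\Lambda|\eta|^{-4/(\tau+2)}$ and optimising over the admissible range $|\eta|\lesssim\Lambda^{(\tau+2)/4}$ should produce the claimed effective exponent $s_*=(\tau+2)/3$; the factor $2/3$ reflects the suboptimal Carleman scaling $\norm{V}_\infty^{2/3}$ discussed after Theorem~\ref{thm:decay-intro}, while the factor $\tau+2$ encodes the sub-Riemannian scaling of the Baouendi-Grushin operator. I would expect the bulk of the technical work to lie in this uniform-in-$\eta$ tracking of the constants of Theorem~\ref{thm:spectral-inequality-ZZ} through scaling and optimisation, since that theorem is stated on the unit scale with constants that depend only implicitly on the model parameters.
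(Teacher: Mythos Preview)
The paper does not contain a proof of this theorem. Theorem~\ref{thm:null-contollability-JW} is quoted in Section~\ref{sec:conjectures} as a result from \cite{JamingW-23} (specifically, a particular case of their Theorem~1.7), presented purely for context alongside the other recent developments that confirmed the authors' conjectures. There is therefore no proof in the present paper to compare your proposal against; any comparison would have to be made with the original argument in \cite{JamingW-23}.

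That said, your outline is a plausible route and is broadly in the spirit of how such Baouendi--Grushin results are obtained: fibre decomposition via $\mathcal{F}_y$, a uniform-in-$\eta$ spectral inequality for $A_\eta=-\Delta_x+|\eta|^2V(x)$, then functional calculus to pass to $A^\theta$ and the Lebeau--Robbiano machinery. Two points would need genuine work beyond what you wrote. First, the scaling $\mu=|\eta|^{-2/(\tau+2)}$ handles the regime of large $|\eta|$, but for small $|\eta|$ the potential $|\eta|^2V$ degenerates (indeed $A_0=-\Delta_x$ has no confinement at all), so the rescaled operator $\tilde H$ does not have $\eta$-independent constants in Assumption~\ref{ass:ZZ}; you would need a separate argument in that regime, for instance treating $|\eta|^2V$ as a bounded perturbation on the relevant localisation scale and invoking a Logvinenko--Sereda/Carleman estimate directly. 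Second, the borderline case $\theta=\theta_*$ corresponds to $s=1$ in \eqref{eq:abstract-spectral-inequality}, which is \emph{not} covered by \cite[Theorem~2.8]{NakicTTV-20}; the large-time null-controllability for $T>T_*$ requires a refined telescoping argument that exploits the precise form of the constants $D_0,D_1$, and this is where the $\log(1/\delta)$ behaviour of $T_*$ has to be extracted explicitly rather than asserted.
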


\bigskip

Due to the use of Carleman inequalities, we can prove the spectral inequality in Theorem~\ref{thm:spectral-inequality} only
for sensor sets containing suitable open balls, but not for measurable sets of the form \eqref{eq:thick-decay}.
This prompts the question, how and under what conditions this restriction can be removed.
Natural candidates are polynomial or analytic potentials, in particular in view of  \cite{LebeauM-19}.

In this vein, let us discuss next results on Shubin operators, that is, Hamiltonians of the form $H = (-\Delta)^m + \abs{x}^{2l}$
with $m,l \in \NN$. Note that the particular case of $m=l=1$ here agrees with the harmonic oscillator. For $l > m = 1$, a first
proof of a spectral inequality of the form \eqref{eq:abstract-spectral-inequality}, and thus observability of
the system generated by $-H$, was established in \cite{Miller-08}, but only for sensor sets $\omega$ as in \eqref{eq:cone}.
Arbitrary sets $\omega$ with strictly positive measure were treated recently in \cite{Martin-22}. There, the author verifies that
for such sets and general $m,l \in \NN$ we have the inequality
\bes
	\norm{f}_{L^2(\RR^d)}
	\leq
	d_0\euler^{d_1 \lambda^{\frac{1}{2m} + \frac{1}{2l}}\log\lambda}\norm{f}_{L^2(\omega)}
	\quad
	\text{for all}
	\quad
	f \in \Ran P_\lambda(H)
	,
	\
	\lambda \geq 1
	,
\ees
which in the case of $\max\{m,l\} > 1$ is indeed of the form \eqref{eq:abstract-spectral-inequality} with the choice
$s = 1/(2m) + 1/(2l) + \eps < 1$ for $\eps \in (0, 1/4)$. The constants $d_0$ and $d_1$ are, however, not explicit in
this case.

A similar result in \cite{Martin-22} treated the case of sensor sets $\omega$ that are thick with respect to certain unbounded
scales, which allows for holes in $\omega$ of growing size. This takes into account more precise information on $\omega$ and
results in an improved dependence on $\lambda$, but the sensor sets are required to have infinite Lebesgue measure and the
constants $d_0$ and $d_1$ are still not explicit.

However, in light of the Bernstein inequalities proved in \cite[Proposition~4.1]{Martin-22}
we expected that the approach presented in \cite{DickeSV-23} could be adapted
for Shubin operators $H = (-\Delta)^m + \abs{x}^{2l}$, $m,l \in \NN$ in order
to obtain a result of the following form:

\begin{conconjecture}\label{conj:Shubin}
	Let $m,l\in\NN$, and suppose that $\omega \subset\RR^d$ satisfies \eqref{eq:thick-decay} with some $\rho > 0$,
	$\gamma \in (0,1]$, and $0 \leq \alpha < l$. Then there are constants $K,C > 0$, with $K$ depending only on $m$, $l$, and the
	dimension $d$, and $C$ depending additionally also on $\rho$ and $\alpha$, such that for all $\lambda \geq 1$ and all
	$f \in \Ran P_\lambda((-\Delta)^m+\abs{x}^{2l})$ we have
	\be\label{eq:conjSpecIneq}
		\norm{f}_{L^2(\RR^d)}
		\leq
		\Bigl(\frac{K}{\gamma}\Bigr)^{C\cdot \lambda^{\frac{\alpha}{2l}+\frac{1}{2m}}}
		\norm{f}_{L^2(\omega)}
		.
	\ee
\end{conconjecture}

This conjecture was indeed confirmed recently in \cite{AlphonseS-22} by the second author and P.~Alphonse. In fact, while
\cite{Martin-22} relies on smoothing estimates for the semigroup generated by $-H$ established in \cite{Alphonse-20b},
\cite{AlphonseS-22} directly works with the underlying Agmon estimates from \cite{Alphonse-20b} in order to prove a variant of
the Bernstein inequalities that allow for very explicit spectral inequalities under fairly general assumptions on the sensor set.
In particular, the estimates of \cite{AlphonseS-22} confirm \eqref{eq:conjSpecIneq} with the explicit bound on the constant
\[
	C
	\leq
	\tilde{K}^{1+\alpha} (1 + \rho^{1+\frac{k}{m}} + \rho)
	,
\]
with $\tilde{K} > 0$ depending only on $m$, $l$, and the dimension $d$, see Theorem~2.3 and Remark~2.5 in \cite{AlphonseS-22}.
Note that the general case of \cite{AlphonseS-22} actually treats sensor sets with a more general geometry than \eqref{eq:thick-decay}.

\section{Decay of eigenfunctions}\label{sec:decay}

In this section we quantify decay properties of linear combinations of eigenfunctions for
the operator $H = -\Delta + V$ with $V$ as in Assumption~\ref{ass:potential}. Although there are
several results available for eigenfunctions establishing a fast decay in $L^2$-sense, see,
e.g., \cite{Simon-75,Agmon-82,Davies-82,DaviesS-84,BerezinS-91}, we need an explicit weighted $L^2$-estimate also
for the partial derivatives of first order. The approach in \cite{GagelmanY-12} seems to be
the most convenient one for this task. However, since it is essential for us to have the
dependence of the decay on the spectral parameter explicitly quantified, we have to revisit the
reasoning from \cite{GagelmanY-12} and extract the statements we need.

\subsection{Properties of the operator and main objective}\label{ssec:operator}
We begin with a review of the construction of the operator $H = -\Delta + V$ with $V$ as in
Assumption~\ref{ass:potential} and a collection of its basic properties.

Consider the forms
\[
	\fa[ f , g ]
	:=
	\int_{\RR^d} \nabla f(x) \cdot \nabla g(x) \Diff{x}
	,\quad
	f,g \in \cD[\fa] := H^1(\RR^d)
	,
\]
as well as $\cD[\fv] := \{ f \in L^2(\RR^d) \colon V^{1/2}f \in L^2(\RR^d) \}$,
\[
	\fv[ f , g ]
	:=
	\langle V^{1/2}f , V^{1/2}g \rangle_{L^2(\RR^d)}
	,\quad
	f,g \in \cD[\fv]
	,
\]
and
\[
	\fh
	:=
	\fa + \fv
	,\quad
	\cD[\fh]
	:=
	\cD[\fa] \cap \cD[\fv]
	.
\]
The nonnegative form $\fa$ is closed since $H^1(\RR^d)$ is complete, and $\fv$ is nonnegative and closed by
\cite[Proposition~10.5\,(ii)]{Schmuedgen-12}. Thus, the form $\fh$ is densely defined, nonnegative, and closed by
\cite[Corollary~10.2]{Schmuedgen-12}, so that there is a unique (nonnegative) selfadjoint operator $H$ on $L^2(\RR^d)$ given by
\[
	\cD(H)
	=
	\{ f \in \cD[\fh] \colon \exists h \in L^2(\RR^d) \text{ s.t. }
		\fh[ f , g ] = \langle h , g \rangle_{L^2(\RR^d)}\ \forall g \in \cD[\fh] \}
\]
and
\[
	\fh[ f , g ]
	=
	\langle Hf , g \rangle_{L^2(\RR^d)}
	,\quad
	f \in \cD(H),\ g \in \cD[\fh]
	.
\]
Since $V(x) \to \infty$ as $\abs{x} \to \infty$, it is well known that $H$ has purely discrete spectrum, see, e.\,g.,
\cite[Proposition~12.7]{Schmuedgen-12}. Moreover, a form core for $H$ is given by $C_c^\infty(\RR^d)$, see, e.\,g.,
\cite[Theorem~1.13]{CyconFKS-87}, that is, every function in $\cD[\fh]$ can be approximated in the form norm for $H$ by functions
in $C_c^\infty(\RR^d)$; a simple proof of this fact for the current type of potential (in particular,
$V \in L_\loc^\infty(\RR^d)$) can also be obtained from \cite[Lemma~2.2]{GagelmanY-12}.

Classic elliptic regularity results (see, e.\,g., \cite[Theorem~S2.2.1]{BerezinS-91}) imply that $\cD(H) \subset H_\loc^2(\RR^d)$
with $Hf = -\Delta f + Vf$ almost everywhere on $\RR^d$ for all $f \in \cD(H)$. In addition, if $Hf \in H_\loc^1(\RR^d)$ for some
$f \in \cD(H)$, then $f \in H_\loc^3(\RR^d)$.

The main objective of the present section is now to prove Theorem \ref{thm:decay-intro}.
In this course, if desired, the dependence of $C'$ in Theorem~\ref{thm:decay-intro} on the parameters $\tau_1,c_1,\nu, M_\nu$ can be traced explicitly
from the proof.
We refrain from doing so here for simplicity and brevity.

\subsection{Weighted inequalities}

We prove Theorem~\ref{thm:decay-intro} by establishing $L^2$-estimates for $f$ and $\abs{\nabla f}$ with an exponential weight. As a
preparation, we need the following technical lemma.

\begin{lemma}[see {\cite[Lemma~2.1]{GagelmanY-12}}]\label{lem:weak}
	Suppose that for some $\phi \in L^2(\RR^d)$ and $\lambda \geq 0$ the function $f \in H_\loc^2(\RR^d) \cap L^2(\RR^d)$
	satisfies $-\Delta f + Vf - \lambda f = \phi$ almost everywhere. Then, $f \in \cD[\fh]$ and for all $g \in \cD[\fh]$ we have
	\[
		\fh[ f , g ] - \lambda \langle f , g \rangle_{L^2(\RR^d)}
		=
		\langle \phi , g \rangle_{L^2(\RR^d)}
		.
	\]
\end{lemma}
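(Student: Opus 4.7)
The strategy splits into two steps: first I would establish the membership $f \in \cD[\fh]$ by a cutoff-and-energy-estimate argument, then derive the weak form identity by testing against smooth compactly supported functions and extending to general $g \in \cD[\fh]$ using the form-core property for $C_c^\infty(\RR^d)$ recalled in Section~\ref{ssec:operator}.

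For the membership, I introduce a family of smooth cutoffs $\chi_R \in C_c^\infty(\RR^d)$ with $\chi_R \equiv 1$ on $B(0,R)$, $\supp \chi_R \subset B(0,2R)$, and $\norm{\grad \chi_R}_{L^\infty} \leq c/R$ for some absolute constant $c$. Since $f \in H_\loc^2(\RR^d) \cap L^2(\RR^d)$, the function $\chi_R^2 \bar f$ has compact support and belongs to $H^1(\RR^d)$, so I may pair the pointwise identity $-\Delta f + Vf - \lambda f = \phi$ with it and integrate by parts in the Laplacian term. Absorbing the cross term $2\int \chi_R \bar f\, \grad \chi_R \cdot \grad f$ via Cauchy--Schwarz (and using that $V \geq 0$ places the potential term on the favorable side) yields the $R$-independent bound
\[
\tfrac12 \int \chi_R^2 \abs{\grad f}^2 \Diff{x} + \int \chi_R^2 V \abs{f}^2 \Diff{x} \leq \lambda \norm{f}_{L^2(\RR^d)}^2 + \norm{\phi}_{L^2(\RR^d)}\norm{f}_{L^2(\RR^d)} + \tfrac{2c^2}{R^2}\norm{f}_{L^2(\RR^d)}^2.
\]
Passing $R \to \infty$ by monotone convergence then gives $\grad f \in L^2(\RR^d)$ and $V^{1/2} f \in L^2(\RR^d)$, hence $f \in \cD[\fh]$.

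For the identity, I first test against $g \in C_c^\infty(\RR^d)$. Since $g$ has compact support and $f \in H_\loc^2(\RR^d)$, a standard integration by parts on $\supp g$ yields
\[
\fa[f,g] + \fv[f,g] - \lambda \langle f,g \rangle_{L^2(\RR^d)} = \langle \phi, g \rangle_{L^2(\RR^d)},
\]
that is, $\fh[f,g] - \lambda \langle f,g \rangle_{L^2(\RR^d)} = \langle \phi, g \rangle_{L^2(\RR^d)}$. For general $g \in \cD[\fh]$ I invoke the form-core statement from Section~\ref{ssec:operator} to choose $g_n \in C_c^\infty(\RR^d)$ converging to $g$ in the form norm $(\fh[\cdot,\cdot] + \norm{\cdot}_{L^2(\RR^d)}^2)^{1/2}$. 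Since $f \in \cD[\fh]$, polarization of the closed nonnegative form $\fh$ shows the functional $g \mapsto \fh[f,g]$ is bounded in form norm, while $g \mapsto \langle f,g \rangle$ and $g \mapsto \langle \phi, g \rangle$ are trivially $L^2$-continuous, so the identity passes to the limit.

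The main technical hurdle is the cutoff energy estimate in the first step; it crucially relies on $V \geq 0$ so that $\int \chi_R^2 V \abs{f}^2$ lands on the left-hand side and need not be separately controlled. The remaining ingredients -- compactly supported integration by parts, monotone convergence, and approximation in the form norm -- are routine once the form core property from Section~\ref{ssec:operator} is available.
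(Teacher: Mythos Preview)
Your proposal is correct and follows essentially the same approach as the paper: the paper likewise obtains $f\in\cD[\fh]$ from the energy bound $\int (\abs{\nabla f}^2 + V\abs{f}^2) \leq \norm{\phi}_{L^2}\norm{f}_{L^2} + \lambda\norm{f}_{L^2}^2$ (delegating the cutoff argument you spell out to \cite[Lemma~2.1]{GagelmanY-12}), and then derives the weak identity for $g\in C_c^\infty(\RR^d)$ by integration by parts and extends by the form-core property. The only difference is that you make the cutoff-and-absorption step explicit, which the paper simply cites.
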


\begin{proof}
	The fact that $f \in \cD[\fh]$ follows from
	\[
		\int_{\RR^d} (\abs{\nabla f}^2 + V\abs{f}^2)
		\leq
		\norm{ \phi }_{L^2(\RR^d)} \norm{ f }_{L^2(\RR^d)} + \lambda\norm{ f }_{L^2(\RR^d)}^2
		,
	\]
	which is proved verbatim as in \cite[Lemma~2.1]{GagelmanY-12}; the smoothness of the potential $V$ assumed there is actually not
	used and not needed.

	For $g \in C_c^\infty(\RR^d)$ we then obtain by integration by parts that
	\begin{align*}
		\fh[ f , g ]
		&=
		\fa[ f , g ] + \fv[ f , g ]
			=
			\int_{\RR^d} (-\Delta f + Vf)\overline{g}
			=
			\int_{\RR^d} (\phi + \lambda f)\overline{g}\\
		&=
		\langle \phi + \lambda f , g \rangle_{L^2(\RR^d)}
		,
	\end{align*}
	and the latter extends to all $g \in \cD[\fh]$ by approximation since
	$C_c^\infty(\RR^d)$ is a form core for $H$; cf.\ the discussion after Lemma~2.2 in \cite{GagelmanY-12}.
\end{proof}%

The next result is now at the core of our proof of Theorem~\ref{thm:decay-intro} and is a quantitative version of the statement in
\cite[Lemma~2.3]{GagelmanY-12}. Its proof is also extracted from that reference.

\begin{lemma}\label{lem:mainTech}
	Let $\lambda \geq 0$, $\mu > 0$, and $R \geq 1$ be such that $V(x) \geq \mu^2 + \lambda + 1$ whenever $\abs{x} \geq R$.
	Moreover, suppose that $f \in H_\loc^2(\RR^d)\cap L^2(\RR^d)$ satisfies $-\Delta f + Vf - \lambda f = \phi$ almost everywhere
	with some $\phi \in L^2(\RR^d)$. Then, if $\euler^{2\mu\abs{x}} \phi \in L^2(\RR^d)$, we have
	\be\label{eq:mainTech:expWeight}
		\norm{ \euler^{\mu\abs{x}} f }_{L^2(\RR^d)}^2
		\leq
		\frac{1}{2}\norm{ \euler^{2\mu\abs{x}}\phi }_{L^2(\RR^d\setminus B(0,R))}^2 +
			(4\mu + 6) \euler^{2\mu(R+1)}\norm{ f }_{L^2(\RR^d)}^2
		.
	\ee
\end{lemma}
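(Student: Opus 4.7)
The plan is to apply the weak formulation from Lemma~\ref{lem:weak} with a carefully engineered bounded Lipschitz test function $g=w^2 f$ that approximates the exponential weight $\euler^{\mu|x|}$ but vanishes on $B(0,R)$, so that only the region where $V-\lambda\geq \mu^2+1$ contributes. Concretely, I fix a radial cutoff $\eta\in C^\infty(\RR^d)$ with $\eta\equiv 0$ on $B(0,R)$, $\eta\equiv 1$ off $B(0,R+1)$, and $|\nabla\eta|\leq 2$, a truncation $\chi_n:=\min(\euler^{\mu|\cdot|},n)$, and set $w_n:=\eta\chi_n$. Plugging $g=w_n^2 f\in\cD[\fh]$ into Lemma~\ref{lem:weak}, taking real parts, and invoking the Agmon-type algebraic identity $\Re\int \nabla f\cdot\nabla(w_n^2\overline{f})=\int|\nabla(w_n f)|^2-\int|\nabla w_n|^2|f|^2$ yields the weighted energy equation
\[
\int |\nabla(w_n f)|^2 \,+\, \int (V-\lambda)\,w_n^2|f|^2 \;=\; \int|\nabla w_n|^2|f|^2 \,+\, \Re\int \phi\,w_n^2\overline{f}.
\]

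Next I would split off $\mu^2 w_n^2$ from the potential term, writing $V-\lambda = (V-\lambda-\mu^2)+\mu^2$, and study the pointwise sign of $|\nabla w_n|^2-\mu^2 w_n^2$. On $\{|x|\geq R+1\}$, $w_n=\chi_n$ and $|\nabla\chi_n|\leq \mu\chi_n$ give $|\nabla w_n|^2-\mu^2 w_n^2\leq 0$; on the transition annulus $\{R\leq|x|\leq R+1\}$, expanding $|\nabla(\eta\chi_n)|^2\leq(4+4\mu\eta+\mu^2\eta^2)\chi_n^2$ produces $|\nabla w_n|^2-\mu^2 w_n^2\leq(4+4\mu)\chi_n^2\leq(4+4\mu)\euler^{2\mu(R+1)}$. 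Combined with $V-\lambda-\mu^2\geq 1$ on the support of $w_n$, this gives
\[
\int w_n^2|f|^2 \;\leq\; (4+4\mu)\,\euler^{2\mu(R+1)}\|f\|_{L^2}^2 \,+\, \Re\int \phi\,w_n^2\overline{f}.
\]
The source term is handled by Cauchy--Schwarz and Young, using $w_n\equiv 0$ on $B(0,R)$ and $w_n^2\leq\euler^{2\mu|x|}$ elsewhere, to obtain $\bigl|\Re\int \phi\,w_n^2\overline{f}\bigr|\leq \tfrac12\|\euler^{2\mu|x|}\phi\|_{L^2(\RR^d\setminus B(0,R))}^2+\tfrac12\|f\|_{L^2}^2$.

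To conclude, I would send $n\to\infty$ (monotone convergence of $w_n\uparrow \eta\,\euler^{\mu|x|}$), then add the trivial bound $\int_{|x|\leq R+1}\euler^{2\mu|x|}|f|^2\leq\euler^{2\mu(R+1)}\|f\|_{L^2}^2$ to both sides to recover the full weight on the left; together with the $\tfrac12\|f\|^2\leq\tfrac12\euler^{2\mu(R+1)}\|f\|^2$ coming from Young, the constants combine to $(4\mu+4+1+\tfrac12)\leq 4\mu+6$, matching \eqref{eq:mainTech:expWeight}. The main obstacle I anticipate is ensuring the prefactor in $\|f\|_{L^2}^2$ depends only \emph{linearly} on $\mu$, rather than the quadratic $\mu^2$ that naively shows up in the transition annulus from $|\nabla(\eta\chi_n)|^2$; this is exactly what the trick of transferring a $\mu^2 w_n^2$ summand from the potential to cancel the leading $\mu^2\eta^2\chi_n^2$ in $|\nabla w_n|^2$ achieves, leaving only the cross term $4\mu\eta\chi_n^2$ and the pure cutoff term $4\chi_n^2$ to be absorbed.
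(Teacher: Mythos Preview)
Your proposal is correct and follows essentially the same Agmon-type weighted energy argument as the paper: a smooth cutoff vanishing on $B(0,R)$, a bounded approximation of the exponential weight, the algebraic identity relating $\Re\int\nabla f\cdot\nabla(w^2\overline f)$ to $\int|\nabla(wf)|^2-\int|\nabla w|^2|f|^2$, the cancellation of the $\mu^2$ term against the potential, and a limit via monotone convergence. The only cosmetic differences are that the paper regularizes the weight by $w_\eps(x)=\mu|x|/(1+\eps|x|)$ rather than your hard truncation $\min(\euler^{\mu|x|},n)$, organizes the same integration by parts via the conjugation identity $\nabla(\euler^{-w}g)\cdot\nabla(\euler^{w}g)=|\nabla g|^2-|g|^2|\nabla w|^2$, and reduces to real-valued $f$ first instead of taking real parts throughout.
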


\begin{proof}
	According to Lemma~\ref{lem:weak}, $f$ belongs to $\cD[\fh]$. We first suppose that $f$ is real-valued.
	Choose an infinitely differentiable function
	$\chi \colon \RR^d \to [0,1]$ with $\chi(x) = 0$ for
	$\abs{x} \leq R$ and $\chi(x) = 1$ for $\abs{x} \geq R+1$ such that
	$\norm{ \abs{ \nabla\chi } }_{L^\infty(\RR^d)} \leq 2$.
	For $\eps > 0$ let $w(x) = w_\eps(x) = \mu\abs{x}/(1+\eps\abs{x})$.
	Then $w$ is bounded and infinitely differentiable on $\RR^d\setminus\{0\}$.
	Accordingly, the same is true for $\chi\euler^w$ and $\chi\euler^{2w}$.
	Therefore, $\chi\euler^{2w}f$, $\chi^2\euler^{2w}f$, and
	$g := \chi\euler^w f$ are all real-valued, belong to $\cD[\fh]$, and vanish
	in the ball $B(0,R)$.
	In particular, the choice of
	$R$ implies that $\fv[ g ,g ]
	\geq (\mu^2 + \lambda + 1)\norm{ g }_{L^2(\RR^d)}^2$.
	Moreover, with the relation
	$\nabla(\euler^{\pm w}g) = \euler^{\pm w}\nabla g \pm g\euler^{\pm w}\nabla w$
	and the identity
	$\norm{ \abs{ \nabla w } }_{L^\infty(\RR^d\setminus\{0\})} = \mu$
	we obtain
	\[
		\nabla(\euler^{-w}g) \cdot \nabla(\euler^w g)
		=
		\abs{ \nabla g }^2 - \abs{ g }^2 \abs{ \nabla w }^2
		\geq
		-\mu^2\abs{ g }^2
		,
	\]
	so that
	\[
		\fh[ \chi f , \chi\euler^{2w}f ]
		=
		\fh[ \euler^{-w}g , \euler^w g ]
		=
		\fa[ \euler^{-w}g , \euler^w g ] + \fv[ g , g ]
		\geq
		(\lambda+1)\norm{ g }_{L^2(\RR^d)}^2
		.
	\]
	The latter can be rewritten as
	\be\label{eq:tech}
		\norm{ \chi\euler^w f }_{L^2(\RR^d)}^2
		\leq
		\fh[ \chi f , \chi\euler^{2w}f ] - \lambda\langle f , \chi^2\euler^{2w}f \rangle_{L^2(\RR^d)}
		.
	\ee

	Clearly, $\fv[ \chi f , \chi\euler^{2w}f ] = \fv[ f , \chi^2\euler^{2w}f ]$. Moreover, a straightforward computation shows that
	$\nabla(\chi f) \cdot \nabla(\chi\euler^{2w}f) = \nabla f \cdot \nabla(\chi^2\euler^{2w}f) + \eta\euler^{2w}\abs{f}^2$ with
	\be\label{eq:defEta}
		\eta
		:=
		2\chi \nabla\chi \cdot \nabla w + \abs{ \nabla \chi }^2
		.
	\ee
	Taking into account Lemma~\ref{lem:weak}, we therefore have
	\begin{align*}
		\fh[ \chi f , \chi\euler^{2w}f ]
		&=
		\fh[ f , \chi^2 \euler^{2w}f ] + \langle f , \eta\euler^{2w}f \rangle_{L^2(\RR^d)}\\
		&=
		\langle \phi + \lambda f, \chi^2 \euler^{2w}f \rangle_{L^2(\RR^d)} + \langle f , \eta\euler^{2w}f \rangle_{L^2(\RR^d)}
		.
	\end{align*}
	Plugging the latter into \eqref{eq:tech} gives
	\be\label{eq:cutWeightedBound}
		\begin{aligned}
			\norm{ \chi\euler^w f }_{L^2(\RR^d)}^2
			&\leq
			\langle \phi , \chi^2 \euler^{2w}f \rangle_{L^2(\RR^d)} + \langle f , \eta\euler^{2w}f \rangle_{L^2(\RR^d)}\\
			&=
			\langle \chi^2 \euler^{2w}\phi , f \rangle_{L^2(\RR^d)} + \langle f , \eta\euler^{2w}f \rangle_{L^2(\RR^d)}\\
			&\leq
			\norm{ \chi^2\euler^{2w}\phi }_{L^2(\RR^d)} \norm{ f }_{L^2(\RR^d)} + \norm{ \eta\euler^{2w} }_{L^\infty(\RR^d)}
				\norm{ f }_{L^2(\RR^d)}^2
			.
		\end{aligned}
	\ee
	The function $\eta$ in \eqref{eq:defEta} vanishes outside of the annulus
	$R < \abs{x} < R+1$ and satisfies
	$\abs{\eta} \leq 2\abs{ \nabla\chi }\abs{ \nabla w } +
	\abs{ \nabla\chi }^2 \leq 4(\mu + 1)$.	
	Hence,
	\[
		\norm{ \eta\euler^{2w} }_{L^\infty(\RR^d)}
		\leq
		4(\mu + 1) \euler^{2\mu(R+1)}
		.
	\]
	We thus conclude from \eqref{eq:cutWeightedBound} that
	\eqs{
		\norm{ \euler^{w} f }_{L^2(\RR^d)}^2
		&=
		\norm{ \euler^w f }_{L^2(B(0,R+1))}^2 + \norm{ \euler^w f }_{L^2(\RR^d\setminus B(0,R+1))}^2\\
		&\leq
		\euler^{2\mu(R+1)} \norm{ f }_{L^2(\RR^d)}^2 + \norm{ \chi\euler^w f }_{L^2(\RR^d)}^2\\
		&\leq
		\norm{ \chi^2\euler^{2w}\phi }_{L^2(\RR^d)}
		\norm{ f }_{L^2(\RR^d)} + (4\mu + 5)
		\euler^{2\mu(R+1)}
		\norm{ f }_{L^2(\RR^d)}^2
		\\
		&\leq
		\norm{ \euler^{2w}\phi }_{L^2(\RR^d \setminus B(0,R))}
		\norm{ f }_{L^2(\RR^d)} + (4\mu + 5)
		\euler^{2\mu(R+1)}
		\norm{ f }_{L^2(\RR^d)}^2\\
		&\leq
		\frac{1}{2}\norm{ \euler^{2w}\phi }_{L^2(\RR^d \setminus B(0,R))}^2 + (4\mu + 6)\euler^{2\mu(R+1)}\norm{ f }_{L^2(\RR^d)}^2
		,
	}
	where we used Young's inequality for the last estimate.
	Since $w(x) = w_\eps(x) \to \mu\abs{x}$ as $\eps \to 0$
	pointwise and monotonically, \eqref{eq:mainTech:expWeight} now follows by monotone
	convergence theorem.

	If $f$ is not real-valued, we proceed analogously for $\Re f$ and $\Im f$ separately and combine the obtained inequalities
	to arrive again at \eqref{eq:mainTech:expWeight}.
\end{proof}%

Applying Lemma~\ref{lem:mainTech} with $\phi = 0$ allows us to obtain the desired weighted $L^2$-estimates for
eigenfunctions of $H$, where $R$ can be computed from $\lambda$ and the constants
in part (i) of Assumption~\ref{ass:potential}.

\begin{proposition}\label{prop:weighted-est-eigenfunction}
	Suppose that $f \in \cD(H)$ with $Hf = \lambda f$ for some $\lambda \geq 0$, and choose $R\geq 1$ such that
	$R^{\tau_1} \geq (\lambda + 2)/c_1$. Then, we have
	\[
		\norm{\euler^{\abs{x}/2}f}_{L^2(\RR^d)}^2
		\leq
		8\euler^{R+1}\norm{f}_{L^2(\RR^d)}^2
		.
	\]
\end{proposition}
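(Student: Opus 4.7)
The proposition is essentially a direct corollary of Lemma~\ref{lem:mainTech}, applied to an eigenfunction. The plan is to take $\mu = 1/2$ and $\phi = 0$ in that lemma, verify its hypotheses under the current assumptions, and read off the claimed inequality.

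First I would observe that since $f \in \cD(H)$, the regularity discussion in Section~\ref{ssec:operator} (elliptic regularity for $H = -\Delta + V$ with $V \in L^\infty_{\loc}$) gives $f \in H^2_{\loc}(\RR^d) \cap L^2(\RR^d)$ and $Hf = -\Delta f + Vf$ almost everywhere. The eigenvalue equation $Hf = \lambda f$ therefore rewrites as
\[
    -\Delta f + Vf - \lambda f = 0
    \quad
    \text{a.e. on } \RR^d,
\]
so $f$ satisfies the hypothesis of Lemma~\ref{lem:mainTech} with $\phi = 0$. The auxiliary condition $\euler^{2\mu\abs{x}}\phi \in L^2(\RR^d)$ is then trivial.

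Next I would check the pointwise lower bound on $V$. Choosing $\mu = 1/2$, the hypothesis of Lemma~\ref{lem:mainTech} requires
\[
    V(x) \geq \mu^2 + \lambda + 1 = \lambda + \tfrac{5}{4}
    \quad
    \text{whenever}
    \quad
    \abs{x} \geq R.
\]
By part~(i) of Assumption~\ref{ass:potential}, $V(x) \geq c_1\abs{x}^{\tau_1}$, and the hypothesis $R^{\tau_1} \geq (\lambda+2)/c_1$ gives for all $\abs{x} \geq R$
\[
    V(x) \geq c_1 R^{\tau_1} \geq \lambda + 2 \geq \lambda + \tfrac{5}{4},
\]
as required.

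Finally, I would apply Lemma~\ref{lem:mainTech} with these choices. Since $\phi = 0$ makes the first term on the right-hand side of \eqref{eq:mainTech:expWeight} vanish and $4\mu + 6 = 8$, we obtain
\[
    \norm{\euler^{\abs{x}/2} f}_{L^2(\RR^d)}^2
    \leq
    8 \euler^{R+1}\norm{f}_{L^2(\RR^d)}^2,
\]
which is exactly the claim. There is no genuine obstacle here: the substantive work has already been carried out in Lemma~\ref{lem:mainTech}, and the only thing to be careful about is matching the constants so that the lower bound on $V$ outside $B(0,R)$ accommodates both $\lambda$ and $\mu^2 = 1/4$ with the explicit buffer provided by $R^{\tau_1} \geq (\lambda+2)/c_1$.
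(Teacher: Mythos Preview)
The proposal is correct and follows exactly the paper's approach: invoke the elliptic regularity from Subsection~\ref{ssec:operator} to write $-\Delta f + Vf - \lambda f = 0$ a.e., then apply Lemma~\ref{lem:mainTech} with $\mu = 1/2$ and $\phi = 0$. Your write-up is in fact more detailed than the paper's, which does not explicitly verify the condition $V(x) \geq \mu^2 + \lambda + 1$ for $\abs{x} \geq R$ or track the constants $4\mu + 6 = 8$ and $\euler^{2\mu(R+1)} = \euler^{R+1}$.
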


\begin{proof}
	According to the discussion in Subsection~\ref{ssec:operator}, the function $f$ belongs to $H_\loc^2(\RR^d)$ and satisfies
	$-\Delta f + Vf - \lambda f = 0$ almost everywhere. Applying Lemma~\ref{lem:mainTech} with $\mu = 1/2$ and $\phi = 0$ therefore
	proves the claim.
\end{proof}

In order to obtain by means of Lemma~\ref{lem:mainTech} an analogous result for the partial derivatives of an eigenfunction, we
follow the approach of \cite{GagelmanY-12} and differentiate the eigenvalue equation $Hf = \lambda f$. Indeed, since
$Hf \in H_\loc^2(\RR^d)$, we know that, in fact, $f$ belongs to $H_\loc^3(\RR^d)$, and it follows that each
$\partial_j f \in H_\loc^2(\RR^d)$, $j = 1,\dots,d$, satisfies
\be\label{eq:diffEq}
	-\Delta \partial_j f + V\partial_j f - \lambda \partial_j f
	=
	-f\partial_j V
\ee
almost everywhere. This allows to apply Lemma~\ref{lem:mainTech} to $\partial_j f$ with a corresponding right-hand side and, thus,
leads to the following result.

\begin{proposition}\label{prop:weighted-est-gradient-eigenfunction}
	Let $f \in \cD(H)$ with $Hf = \lambda f$ for some $\lambda \geq 0$, and choose $R \geq 1$ such that
	$R^{\tau_1} \geq ((\nu+1)^2 + \lambda + 1)/c_1$. Then, we have
	\[
		\norm{ \euler^{\abs{x}/2} \abs{ \nabla f } }_{L^2(\RR^d)}^2
		\leq
		\bigl( 8\lambda + (2\nu+5)M_\nu^2 \bigr) \euler^{2(1+\nu)(R+1)}
		\norm{ f }_{L^2(\RR^d)}^2
		.
	\]
\end{proposition}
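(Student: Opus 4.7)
My plan is to apply Lemma~\ref{lem:mainTech} to each partial derivative $\partial_j f$ using the differentiated eigenvalue equation~\eqref{eq:diffEq}, which says that $-\Delta\partial_j f + V\partial_j f - \lambda \partial_j f = -f\partial_j V$ almost everywhere, and then sum over $j$ so that the bound on $\nabla V$ from Assumption~\ref{ass:potential}(ii) enters through $\sum_j|\partial_j V|^2 = |\nabla V|^2$ rather than through a union bound, which would cost a factor of $d$. The output of Lemma~\ref{lem:mainTech} will involve a weighted $L^2$-norm of $f$ itself with weight $\euler^{2w}\cdot|\partial_j V|$, and to control this I will apply Lemma~\ref{lem:mainTech} a second time, now to $f$ with $\phi = 0$ but at weight parameter $\mu = 1+\nu$ rather than $\mu = 1/2$.

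More concretely, first I will check the setup: by the elliptic regularity discussion in Subsection~\ref{ssec:operator}, $Hf = \lambda f \in H^1_\loc$ gives $f \in H^3_\loc$, so $\partial_j f \in H^2_\loc \cap L^2$ and \eqref{eq:diffEq} holds pointwise a.e.; moreover $-f\partial_j V \in L^2$ since $V \in W^{1,\infty}_\loc$ and $f \in L^2$ with the strong decay from Proposition~\ref{prop:weighted-est-eigenfunction}. Then I apply Lemma~\ref{lem:mainTech} to $\partial_j f$ with $\mu = 1/2$ and $\phi = -f\partial_j V$; the hypothesis $V \geq 1/4 + \lambda + 1$ on $\RR^d \setminus B(0,R)$ is implied (with room to spare) by the larger threshold $R^{\tau_1} \geq ((\nu+1)^2+\lambda+1)/c_1$ assumed, together with $V(x)\geq c_1|x|^{\tau_1}$. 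This yields
\[
	\|\euler^{|x|/2}\partial_j f\|_{L^2}^2 \leq \tfrac{1}{2}\|\euler^{|x|}f\partial_j V\|_{L^2(\RR^d\setminus B(0,R))}^2 + 8\euler^{R+1}\|\partial_j f\|_{L^2}^2.
\]

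Next I sum over $j = 1,\dots,d$. For the first term on the right, $\sum_j|\partial_j V|^2 = |\nabla V|^2 \leq M_\nu^2\euler^{2\nu|x|}$ on $\RR^d\setminus B(0,1)$ by \eqref{eq:Vgrowth} (and $R\geq 1$), so
\[
	\tfrac{1}{2}\sum_j\|\euler^{|x|}f\partial_j V\|_{L^2(\RR^d\setminus B(0,R))}^2 \leq \tfrac{M_\nu^2}{2}\|\euler^{(1+\nu)|x|}f\|_{L^2(\RR^d)}^2.
\]
For the second term I use the form identity: $\sum_j\|\partial_j f\|_{L^2}^2 = \fa[f,f] \leq \fh[f,f] = \langle Hf,f\rangle = \lambda\|f\|_{L^2}^2$.

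The final step is to bound $\|\euler^{(1+\nu)|x|}f\|_{L^2}^2$ by a second application of Lemma~\ref{lem:mainTech}, this time with $\mu = 1+\nu$ and $\phi = 0$ applied to $f$ (which satisfies $-\Delta f + Vf - \lambda f = 0$). This application is precisely why the hypothesis is $R^{\tau_1}\geq ((\nu+1)^2+\lambda+1)/c_1$: one needs $V \geq (1+\nu)^2 + \lambda + 1$ outside $B(0,R)$. Lemma~\ref{lem:mainTech} then gives $\|\euler^{(1+\nu)|x|}f\|_{L^2}^2 \leq (4(1+\nu)+6)\euler^{2(1+\nu)(R+1)}\|f\|_{L^2}^2 = (4\nu+10)\euler^{2(1+\nu)(R+1)}\|f\|_{L^2}^2$. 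Combining everything, absorbing $\euler^{R+1}\leq \euler^{2(1+\nu)(R+1)}$, and using $\tfrac{1}{2}(4\nu+10) = 2\nu+5$ yields exactly
\[
	\|\euler^{|x|/2}|\nabla f|\|_{L^2}^2 \leq \bigl(8\lambda + (2\nu+5)M_\nu^2\bigr)\euler^{2(1+\nu)(R+1)}\|f\|_{L^2}^2.
\]

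The main technical care needed is the bookkeeping of the two different weight parameters $\mu = 1/2$ and $\mu = 1+\nu$ and the corresponding thresholds $R$; picking the larger $R$ uniformly works since Lemma~\ref{lem:mainTech} is monotone in $R$ (larger $R$ only makes the prefactor $\euler^{2\mu(R+1)}$ worse, which is already what appears in the target bound). The decisive idea, however, is summing over $j$ before applying the bound on $\partial_j V$ to exploit $\sum_j|\partial_j V|^2 = |\nabla V|^2$; doing this the other way around would introduce a spurious dimensional factor that is absent from the target inequality.
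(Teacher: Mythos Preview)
Your argument is correct and matches the paper's proof essentially step for step: apply Lemma~\ref{lem:mainTech} to each $\partial_j f$ with $\mu=1/2$, sum over $j$ so that $\sum_j|\partial_jV|^2=|\nabla V|^2$, bound $\|\nabla f\|_{L^2}^2\le\lambda\|f\|_{L^2}^2$ via the form, and control the remaining weighted term by a second application of Lemma~\ref{lem:mainTech} to $f$ with $\mu=1+\nu$. One small point of ordering: your citation of Proposition~\ref{prop:weighted-est-eigenfunction} to justify $\phi_j=-f\partial_jV\in L^2$ (and, implicitly, $\euler^{|x|}\phi_j\in L^2$) is not quite sufficient when $\nu>1/2$; the paper avoids this by first establishing the $\euler^{(1+\nu)|x|}$-weighted bound on $f$ (your final step) and only then invoking Lemma~\ref{lem:mainTech} for $\partial_jf$.
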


\begin{proof}
	Denote by $\phi_j := -f\partial_j V$ the right-hand side of \eqref{eq:diffEq}.

	In light of the hypothesis on $R$, we may first apply Lemma~\ref{lem:mainTech} to $f$ with $\mu = \nu + 1$ and $\phi = 0$ to
	obtain
	\be\label{eq:expWeight}
		\norm{ \euler^{(1+\nu)\abs{x}}f }_{L^2(\RR^d)}^2
		\leq
		(4\nu+10) \euler^{2(1+\nu)(R+1)} \norm{ f }_{L^2(\RR^d)}^2
		.
	\ee
	Since $\abs{ \phi_j(x) } \leq M_\nu \euler^{\nu\abs{x}} \abs{ f }$ on $\RR^d\setminus B(0,1)$, we conclude that
	$\euler^{\abs{x}} \phi_j \in L^2(\RR^d\setminus B(0,1))$.
	In view of \eqref{eq:diffEq}, we may then again apply Lemma~\ref{lem:mainTech}, this time to $\partial_j f$
	with $\mu = 1/2$ and $\phi = \phi_j = -f\partial_j V$, which gives
	\be
		\norm{ \euler^{\abs{x}/2} \partial_j f }_{L^2(\RR^d)}^2
		\leq
		\frac{1}{2}\norm{ \euler^{\abs{x}}\phi_j }_{L^2(\RR^d\setminus B(0,1))}^2 + 8\euler^{R+1} \norm{ \partial_j f }_{L^2(\RR^d)}^2
		.
	\ee
	Taking into account \eqref{eq:expWeight} and that
	\[
		\norm{ \abs{ \nabla f } }_{L^2(\RR^d)}^2
		=
		\fa[ f , f ]
		\leq
		\fh[ f , f ]
		=
		\langle Hf , f \rangle_{L^2(\RR^d)}
		=
		\lambda \norm{ f }_{L^2(\RR^d)}^2
		,
	\]
	summing over $j$ then yields
	\begin{align*}
		\norm{ \euler^{\abs{x}/2} \abs{ \nabla f } }_{L^2(\RR^d)}^2
		&\leq
		\frac{1}{2}\norm{ \euler^{\abs{x}}f\abs{ \nabla V } }_{L^2(\RR^d\setminus B(0,1))}^2 + 8\euler^{R+1} \norm{ \abs{ \nabla f } }_{L^2(\RR^d)}^2\\
		&\leq
		\frac{M_\nu^2}{2}\norm{ \euler^{(1+\nu)\abs{x}}f }_{L^2(\RR^d\setminus B(0,1))}^2 + 8\lambda\euler^{R+1} \norm{ f }_{L^2(\RR^d)}^2\\
		&\leq
		\bigl( 8\lambda + (2\nu+5)M_\nu^2 \bigr) \euler^{2(1+\nu)(R+1)} \norm{ f }_{L^2(\RR^d)}^2
		,
	\end{align*}
	which proves the claim.
\end{proof}%

\subsection{Proof of Theorem~\ref{thm:decay-intro}}
Recall that $H$ has purely discrete spectrum, and let $(\lambda_k)_{k\in\NN}$ be an enumeration of its spectrum $\sigma(H)$ in
nondecreasing order (without multiplicities). With
\[
	N(\lambda)
	:=
	\#(\sigma(H)\cap (-\infty,\lambda])
	,
\]
we may then expand every $f \in \Ran P_\lambda(H)$ as
\be\label{eq:fExpand}
	f
	=
	\sum_{k=1}^{N(\lambda)} f_k
\ee
where $f_k=\indic_{\{\lambda_k\}}(H)f$ for $k \in \{1,\dots,N(\lambda)\}$.
Note that we have the simple bound
\[
	N(\lambda)
	\leq
	\#\{ k \colon \lambda_k \leq \lambda \}
		\leq
		\sum_{k \colon \lambda_k \leq \lambda} (\lambda + 1 - \lambda_k )
		\leq
		\sum_{k \colon \lambda_k \leq \lambda + 1} (\lambda + 1 - \lambda_k )
\]
and in light of the lower bound $V(x) \geq c_1\abs{x}^{\tau_1}$ on the potential in part (i) of
Assumption~\ref{ass:potential}, the right hand side can be estimated explicitly by means of the classic
Lieb-Thirring bound from \cite[Theorem~1]{LiebT-91}. More precisely, for $\lambda \geq 1$ we have
\eqs{
	\sum_{k \colon \lambda_k \leq \lambda + 1} (\lambda + 1 - \lambda_k )
	&\lesssim_d
	\int_{\RR^d} \max\{ \lambda + 1 - V(x) , 0 \}^{d/2 + 1} \Diff{x} \\
	&\leq
	\int_{B(0,((\lambda+1)/c_1)^{1/\tau_1})} (\lambda+1)^{d/2 + 1} \Diff{x} \\
	&\lesssim_{d,\tau_1,c_1}
	\lambda^{1 + d(1/2+1/\tau_1)}
	,
}
and, therefore,
\be\label{eq:boundN}
	N(\lambda)
	\lesssim_{d,\tau_1,c_1}
	\lambda^{1 + d(1/2+1/\tau_1)}
	.
\ee

\begin{remark}
	Note that the Lieb-Thirring bound actually also takes into account multiplicities.
	It is worth to mention that for $d \geq 3$ the classic Cwikel-Lieb-Rozenblum bound provides a sharper bound on
	$N(\lambda)$, but the above is more than sufficient for our purposes.
\end{remark}

We are now in position to prove the main result of this section.

\begin{proof}[Proof of Theorem~\ref{thm:decay-intro}]
	For every $r > 0$, we have
	\eqs{
		\norm{ f }_{H^1(\RR^d\setminus B(0,r))}^2
		&=
		\norm{ f }_{L^2(\RR^d\setminus B(0,r))}^2 + \norm{ \abs{\nabla f} }_{L^2(\RR^d\setminus B(0,r))}^2\\
		&\leq
		\euler^{-r} \bigl( \norm{ \euler^{\abs{x}/2}f }_{L^2(\RR^d)}^2 +
			\norm{ \euler^{\abs{x}/2}\abs{\nabla f} }_{L^2(\RR^d)}^2 \bigr)
		.
	}
	Moreover, using the expansion \eqref{eq:fExpand} and H\"older's inequality, we may estimate
	\[
	  \norm{ \euler^{\abs{x}/2}f }_{L^2(\RR^d)}^2
		\leq
		\Biggl( \sum_{k=1}^{N(\lambda)} \norm{ \euler^{\abs{x}/2}f_k }_{L^2(\RR^d)} \Biggr)^2
		\leq
		N(\lambda) \sum_{k=1}^{N(\lambda)} \norm{ \euler^{\abs{x}/2}f_k }_{L^2(\RR^d)}^2
	\]
	and similarly, taking into accout $\abs{ \nabla f } \leq \sum_{k=1}^{N(\lambda)} \abs{ \nabla f_k }$,
	\[
		\norm{ \euler^{\abs{x}/2}\abs{ \nabla f } }_{L^2(\RR^d)}^2
		\leq
		N(\lambda) \sum_{k=1}^{N(\lambda)} \norm{ \euler^{\abs{x}/2}\abs{ \nabla f_k } }_{L^2(\RR^d)}^2
		.
	\]

	We choose $R := ( (\nu+1)^2 + \lambda + 1 )^{1/\tau_1}/c_1 \lesssim_{\nu,\tau_1, c_1} \lambda^{1/\tau_1}$, which meets the requirement on
	$R$ in both Propositions~\ref{prop:weighted-est-eigenfunction} and~\ref{prop:weighted-est-gradient-eigenfunction} for all
	eigenfunctions corresponding to eigenvalues not exceeding $\lambda$. In particular, this is the case for the functions $f_k$ in
	the expansion \eqref{eq:fExpand}.
	Since $\sum_{k=1}^{N(\lambda)} \norm{f_k}_{L^2(\RR^d)}^2 = \norm{f}_{L^2(\RR^d)}^2$ and in light of \eqref{eq:boundN}, applying
	Propositions~\ref{prop:weighted-est-eigenfunction} and~\ref{prop:weighted-est-gradient-eigenfunction} for each $f_k$ separately
	therefore implies that there is a constant $\tilde{C} > 0$, depending only on $c_1,\tau_1,\nu,M_\nu$, and $d$, such that
	\[
		\norm{ f }_{H^1(\RR^d\setminus B(0,r))}^2
		\leq
		\euler^{-r} \euler^{\tilde{C}\lambda^{1/\tau_1}} \norm{f}_{L^2(\RR^d)}^2
		.
	\]
	Choosing $r := \log 2 + \tilde{C}\lambda^{1/\tau_1} \leq (\tilde{C}+\log 2)\lambda^{1/\tau_1}$ then proves the claim with the
	constant $C' = \tilde{C} + \log 2$.
\end{proof}

%
%%%%%%%%%%%%%%%%%%%%%%%%%%%%%%%%%%%%%%%%%%%%%%%%%%%%%%%%%%%%%%%%%%%%%%%%%%%%%%%%%%%%%%%%%%%%%%%%%%%%%%%%%%%%%%%%%%%%%%%%%%%%%%%%%%%%%%%%%%%%%%%%%%%%%%%%%%
%%%%%%%%%%%%%%%%%%%%%%%%%%%%%%%%%%%%%%%%%%%%%%%%%%%%%%%%%%%%%%%%%%%%%%%%% PROOF %%%%%%%%%%%%%%%%%%%%%%%%%%%%%%%%%%%%%%%%%%%%%%%%%%%%%%%%%%%%%%%%%%%%%%%%%%
%%%%%%%%%%%%%%%%%%%%%%%%%%%%%%%%%%%%%%%%%%%%%%%%%%%%%%%%%%%%%%%%%%%%%%%%%%%%%%%%%%%%%%%%%%%%%%%%%%%%%%%%%%%%%%%%%%%%%%%%%%%%%%%%%%%%%%%%%%%%%%%%%%%%%%%%%%
%

\section{Proof of the spectral inequality}\label{sec:proof}
Due to Theorem~\ref{thm:decay-intro} it is sufficient to derive an analog of the spectral inequality on a large, but finite cube.
Once the potential $V$ is restricted to the cube, it is a bounded function. This is the situation for which
Section 3 of \cite{NakicTTV-18} provides appropriate interpolation and spectral inequality from equidistributed sets.
Note that \cite{NakicTTV-20a} extends these results to unbounded domains including $\RR^d$, which is of relevance to us
since the operator $H$ is defined on $\RR^d$.

Our proof of Theorem~\ref{thm:spectral-inequality} relies on an adaptation of these estimates, which we present next.

\subsection{Ghost dimension}\label{ssec:ghost-dimension}

We make use of the so-called \emph{ghost dimension} construction, which was first introduced in \cite{JerisonL-99}. Following the
proofs in \cite{NakicTTV-18,NakicTTV-20a,DickeRST-20}, we denote by $(\cF_t)_{t\in\RR}$ the family of unbounded selfadjoint
operators
\[
	\cF_t
	=
	\int_\RR s_t(\mu) \,\Diff{P_\mu(H)}
	,\quad
	s_t(\mu)
	=
	\begin{cases}
		\frac{\sinh(\sqrt{\mu}t)}{\sqrt{\mu}} ,& \mu > 0,\\
		t ,& \mu = 0,
	\end{cases}
\]
in $L^2(\RR^d)$. For fixed $f \in \Ran P_\lambda(H)$, $\lambda \geq 0$, we then define $F \colon \RR^d \times \RR \to \CC$ by
\be\label{def:ghost}
	F(\cdot,t)
	=
	\cF_t f
	\in
	\Ran P_\lambda(H)
	\subset
	\cD(H)
	.
\ee
Expanding $f$ as in \eqref{eq:fExpand} we clearly have
\be\label{eq:ghostExpand}
	F(x,t)
	=
	\sum_{k=1}^{N(\lambda)} f_k(x)s_t(\lambda_k)
	,\quad
	(x,t) \in \RR^d \times \RR
	.
\ee
From this, we easily see that $F$ is measurable and belongs to $H_\loc^2(\RR^{d+1})$. Moreover, we clearly have
$\partial_t F(\cdot,t) \in \Ran P_\lambda(H)$.
Taking into account that $\partial_t s_t(\mu)|_{t=0}=1$ and $\partial_t^2 s_t(\mu)=\mu s_t(\mu)$
for all $\mu \geq 0$, it also follows that
\be\label{eq:ghostBoundary}
	(\partial_t F)(\cdot,0)
	=
	f
	,
\ee
as well as
\be\label{eq:ghostHarmonic}
	H(F(\cdot,t))
	=
	(\partial_t^2 F)(\cdot,t)
	\quad\text{ for all }\quad
	t \in \RR
	.
\ee

The following lemma is an analogue to \cite[Proposition~3.6]{NakicTTV-18},
\cite[Proposition~2.9]{NakicTTV-20a}, and \cite[Lemma~6.1]{DickeRST-20} and connects
the extended function $F$ to the original function $f$.

\begin{lemma}\label{lem:ghost}
	Let $f \in \Ran P_\lambda(H)$, and let $F \colon \RR^d \times \RR \to \CC$ be defined as in \eqref{def:ghost}. Then,
	for every $\varrho > 0$ the restriction of $F$ to $\RR^d \times (-\varrho,\varrho)$ belongs to the Sobolev space
	$H^1(\RR^d \times (-\varrho,\varrho))$ with
	\[
		2\varrho\norm{ f }_{L^2(\RR^d)}^2
		\leq
		\norm{ F }_{H^1(\RR^d \times (-\varrho,\varrho))}^2
		\leq
		2\varrho(1+(1+\lambda) \varrho^2)\euler^{2\varrho\sqrt{\lambda}} \norm{ f }_{L^2(\RR^d)}^2
		.
	\]
\end{lemma}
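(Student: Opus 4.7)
The natural approach is to insert the eigenfunction expansion \eqref{eq:ghostExpand} into each term of $\norm{F}_{H^1(\RR^d\times(-\varrho,\varrho))}^2 = \norm{F}_{L^2}^2 + \norm{\nabla_x F}_{L^2}^2 + \norm{\partial_t F}_{L^2}^2$ and to reduce the computation to scalar integrals in~$t$ using orthogonality of the~$f_k$. The required membership in $H^1(\RR^d\times(-\varrho,\varrho))$ will follow a posteriori from the finiteness of the upper bound, since $F$ is already known to lie in $H^1_{\loc}$.

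First I would treat the three norms separately. Orthogonality of the $f_k$ in $L^2(\RR^d)$ yields
\bes
    \norm{F(\cdot,t)}_{L^2(\RR^d)}^2 = \sum_{k=1}^{N(\lambda)} s_t(\lambda_k)^2 \norm{f_k}_{L^2(\RR^d)}^2,
\ees
and since $\partial_t s_t(\mu) = \cosh(\sqrt{\mu}t)$ (interpreted as $1$ at $\mu=0$), also
\bes
    \norm{\partial_t F(\cdot,t)}_{L^2(\RR^d)}^2 = \sum_{k=1}^{N(\lambda)} \cosh^2(\sqrt{\lambda_k}\,t) \norm{f_k}_{L^2(\RR^d)}^2.
\ees
For the spatial gradient I would exploit that $F(\cdot,t)\in\Ran P_\lambda(H)\subset\cD[\fh]$ together with the form identity and the spectral bound $\lambda_k\leq\lambda$: this gives
\bes
    \norm{\nabla_x F(\cdot,t)}_{L^2(\RR^d)}^2 \leq \fh[F(\cdot,t),F(\cdot,t)] = \sum_{k=1}^{N(\lambda)} \lambda_k s_t(\lambda_k)^2 \norm{f_k}_{L^2(\RR^d)}^2 \leq \lambda\,\norm{F(\cdot,t)}_{L^2(\RR^d)}^2.
\ees

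Next I would bound the scalar quantities. The lower bound $\norm{F}_{H^1}^2\geq 2\varrho\norm{f}_{L^2(\RR^d)}^2$ comes for free from $\norm{\partial_t F}_{L^2}^2$ alone by using $\cosh^2\geq 1$ and $\sum_k\norm{f_k}^2=\norm{f}^2$. For the upper bound the elementary estimates $\sinh(x)\leq x\euler^x$ and $\cosh(x)\leq\euler^x$ for $x\geq 0$ give
\bes
    s_t(\mu)^2 \leq t^2\euler^{2\sqrt{\mu}\abs{t}}, \qquad \cosh^2(\sqrt{\mu}\,t) \leq \euler^{2\sqrt{\mu}\abs{t}},
\ees
uniformly also at $\mu=0$. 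Integrating over $t\in(-\varrho,\varrho)$, using $\lambda_k\leq\lambda$, and combining with the gradient estimate above yields
\bes
    \norm{F}_{L^2}^2 \leq 2\varrho^3\euler^{2\varrho\sqrt{\lambda}}\norm{f}^2, \quad \norm{\nabla_x F}_{L^2}^2 \leq 2\lambda\varrho^3\euler^{2\varrho\sqrt{\lambda}}\norm{f}^2, \quad \norm{\partial_t F}_{L^2}^2 \leq 2\varrho\euler^{2\varrho\sqrt{\lambda}}\norm{f}^2,
\ees
and summing produces exactly the prefactor $2\varrho(1+(1+\lambda)\varrho^2)\euler^{2\varrho\sqrt{\lambda}}$.

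There is no real obstacle here; the only points requiring care are (i) justifying the use of form identity together with Parseval for the sum over the discrete spectrum — which is clean because $F(\cdot,t)$ lies in the finite-dimensional space $\Ran P_\lambda(H)$ — and (ii) checking the bounds on $s_t$ and $\partial_t s_t$ uniformly across $\mu\geq 0$, including the degenerate case $\mu=0$. The finiteness of the right-hand side together with Fubini then upgrades the local $H^1$ regularity of $F$ to global $H^1$ regularity on $\RR^d\times(-\varrho,\varrho)$, completing the proof.
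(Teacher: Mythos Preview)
Your proposal is correct and follows essentially the same route as the paper: both arguments use the orthogonal expansion \eqref{eq:ghostExpand} together with the elementary bounds $|s_t(\mu)|\leq |t|\euler^{\sqrt{\mu}|t|}$ and $1\leq\cosh(\sqrt{\mu}t)\leq\euler^{\sqrt{\mu}|t|}$, control the spatial gradient via $\norm{\nabla_x F(\cdot,t)}^2\leq\langle HF(\cdot,t),F(\cdot,t)\rangle\leq\lambda\norm{F(\cdot,t)}^2$, and then integrate in $t$; the lower bound is obtained identically from $\norm{\partial_t F(\cdot,t)}^2\geq\norm{f}^2$.
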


\begin{proof}
	Using $\abs{ \sinh(\sqrt{\mu}t) } \leq \abs{t}\sqrt{\mu}\cosh(\sqrt{\mu}t)$,
	$1 \leq \cosh(\sqrt{\mu}t) \leq \euler^{\abs{t}\sqrt{\mu}}$, and the identity $\partial_t s_t(\mu) = \cosh(\sqrt{\mu}t)$ for all
	$t \in \RR$ and $\mu \geq 0$, we easily obtain from the expansion	\eqref{eq:ghostExpand} that
	\be\label{eq:FboundUpper}
		\norm{ F(\cdot,t) }_{L^2(\RR^d)}^2
		\leq
		t^2\euler^{2\abs{t}\sqrt{\lambda}} \norm{ f }_{L^2(\RR^d)}^2
		\leq
		\varrho^2\euler^{2\varrho\sqrt{\lambda}} \norm{ f }_{L^2(\RR^d)}^2
		,
	\ee
	as well as
	\be\label{eq:derFbounds}
		\norm{ f }_{L^2(\RR^d)}^2
		\leq
		\norm{ \partial_t F(\cdot,t) }_{L^2(\RR^d)}^2
		\leq
		\euler^{2\abs{t}\sqrt{\lambda}} \norm{ f }_{L^2(\RR^d)}^2
		\leq
		\euler^{2\varrho\sqrt{\lambda}} \norm{ f }_{L^2(\RR^d)}^2
	\ee
	for all $t \in (-\varrho,\varrho)$.
	
	Taking into account that $F(\cdot,t) \in \Ran P_\lambda(H) \subset \cD(H)$ for all $t \in \RR$, we now clearly have
	\begin{align*}
		\sum_{k=1}^d \norm{ \partial_k F(\cdot,t) }_{L^2(\RR^d)}^2
		&\leq
		\langle H F(\cdot,t) , F(\cdot,t) \rangle_{L^2(\RR^d)}
			\leq
			\lambda \norm{ F(\cdot,t) }_{L^2(\RR^d)}^2\\
		&\leq
		\lambda \varrho^2\euler^{2\varrho\sqrt{\lambda}} \norm{ f }_{L^2(\RR^d)}^2
		.
	\end{align*}
	Combining the latter with \eqref{eq:FboundUpper} and the upper bound in \eqref{eq:derFbounds} and integrating over
	$t \in (-\varrho,\varrho)$ proves that $F|_{\RR^d \times (-\varrho,\varrho)}$ belongs to $H^1(\RR^d\times(-\varrho,\varrho))$
	satisfying the upper bound in the claim.

	For the lower bound, we simply observe that
	\[
		\norm{ F }_{H^1(\RR^d \times (-\varrho,\varrho))}^2
		\geq
		\int_{-\varrho}^\varrho \norm{ \partial_t F(\cdot,t) }_{L^2(\RR^d)}^2 \Diff{t}
		\geq
		2\varrho \norm{ f }_{L^2(\RR^d)}^2
	\]
	by the lower bound in \eqref{eq:derFbounds}, which completes the proof.
\end{proof}%

\subsection{Proof of Theorem~\ref{thm:spectral-inequality}}

Let $\lambda \geq 1$ and $f\in\Ran P_\lambda(H) \setminus \{0\}$, and define $F$ as in \eqref{eq:ghostExpand}. We infer from
Theorem~\ref{thm:decay-intro} that there is a constant $C' > 0$, depending on $\tau_1,c_1,\nu,M_\nu$, and $d$, such that
\[
	\norm{ g }_{H^1(\RR^d)}^2
	\leq
	2\norm{ g }_{H^1(B(0,C'\lambda^{1/\tau_1}))}^2
	\quad\text{ and }\quad
	\norm{ g }_{L^2(\RR^d)}^2
	\leq
	2\norm{ g }_{L^2(B(0,C'\lambda^{1/\tau_1}))}^2
\]
for all $g \in \Ran P_\lambda(H)$. Applying the latter for each $t \in (-1,1)$ to $g = F(\cdot,t)$ and
$g = \partial_t F(\cdot,t)$, respectively, yields
\be\label{eq:decay-gamma-potential-ghost}
	\norm{F}_{H^1(\RR^d\times(-1,1))}^2
	\leq
	2\norm{F}_{H^1(B(0,C'\lambda^{1/\tau_1})\times(-1,1))}^2
	.
\ee

Let $\Lambda$ be the smallest cube of integer sidelength centered at the origin that contains $B(0,C'\lambda^{1/\tau_1})$.
For technical reasons, we from now on suppose that $C' \geq 5$, so that $\Lambda$ has sidelength at least $5$.
Hence $\Lambda:= (-L,L)^d$ with $2L\in \{5,6,7,\ldots\}$ and $C'\lambda^{1/\tau_1} \leq L \leq C'\lambda^{1/\tau_1} +1\leq 2C'\lambda^{1/\tau_1} $.
Set
$\cK := \cK(\lambda) := \{ k \in \ZZ^d \colon k \in \Lambda \}$. 
Then, $|k|\leq \sqrt{d}L$, hence $1+|k|^\alpha\leq 2 d^{\alpha/2}L^\alpha$ for all
$k \in \cK$, so that
\be\label{eq:defEquiconst}
	\delta^{1+\abs{k}^\alpha}
	\geq
	\delta^{2(2\sqrt{d}C')^\alpha \lambda^{\alpha/\tau_1}}
	=:
	\theta
	\quad\text{ for all }\quad
	k \in \cK
	.
\ee
Moreover, the closure of $\Lambda$ agrees with the union $\bigcup_{k\in\cK} (k + [-1/2,+1/2]^d)$, and the hypothesis on $\omega$
implies that for each $k \in \cK$ the intersection $\omega \cap (k + (-1/2,+1/2)^d)$ contains a ball of radius $\theta$. In
particular, $\omega \cap \Lambda$ is $\theta$-equidistributed (in $\Lambda$) in the sense of
\cite{NakicTTV-18,NakicTTV-20a,DickeRST-20}.

The next interpolation result is a consequence of
the Carleman estimates from \cite{NakicRT-19, LebeauR-95, JerisonL-99} and
relies on the fact that by the upper bound in part (i) of Assumption~\ref{ass:potential}
the potential $V$ can be bounded on $\Lambda$ (or a suitably scaled version thereof) by
a multiple of $(\lambda^{1/\tau_1})^{\tau_2}$. The situation therefore morally boils down to the one of
bounded potentials, so that the proof can be extracted from \cite{NakicTTV-18,NakicTTV-20a} and is omitted here.
We refer, however, to \cite[Proposition~6.14]{Dicke-22} for a detailed presentation.

\begin{proposition}\label{prop:covering-interpolation}
	Let $\theta$ be as in \eqref{eq:defEquiconst}, and set $R := 9\euler\sqrt{d}$. Then, there is $\kappa \in (0,1)$ satisfying
	$1/\kappa \lesssim_d \log(1/\theta)$ and a constant $D \geq 1$, depending on $c_2,\tau_2,C'$, and the dimension $d$, such
	that
	\[
		\norm{F}_{H^1(\Lambda\times(-1,1))}
		\leq
		\theta^{-\kappa D\lambda^{2\tau_2/3\tau_1}}
			\norm{F}_{H^1(\RR^d\times(-R,R))}^{1-\kappa/2}
			\norm{f}_{L^2(\omega\cap\Lambda)}^{\kappa/2}
		.
	\]
\end{proposition}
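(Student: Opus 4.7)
The plan is to combine the fact that $F$ solves an elliptic equation in $(d+1)$ variables with coefficients bounded on $\Lambda$, with the fact that $\omega\cap\Lambda$ is $\theta$-equidistributed in $\Lambda$ in the sense of \cite{NakicTTV-18,NakicTTV-20a,DickeRST-20}, and then to invoke the interpolation inequality developed in those references essentially verbatim.

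First, I would collect the analytic setup. By \eqref{eq:ghostHarmonic}, $F$ weakly satisfies $-\Delta_{(x,t)} F + V(x)F = 0$ on $\RR^d\times\RR$, so on $\Lambda\times(-R,R)$ it is a solution to a $(d+1)$-dimensional elliptic equation whose potential depends only on $x$. The upper bound in Assumption~\ref{ass:potential}(i), combined with $\Lambda=(-L,L)^d$ and $L\leq 2C'\lambda^{1/\tau_1}$, gives
\[
	\norm{V}_{L^\infty(\Lambda)} \leq c_2(\sqrt{d}\,L)^{\tau_2} \lesssim_{c_2,\tau_2,C',d} \lambda^{\tau_2/\tau_1},
\]
so on $\Lambda$ we are dealing with an elliptic equation with a bounded potential whose sup-norm $V_\infty:=\norm{V}_{L^\infty(\Lambda)}$ is controlled explicitly by $\lambda^{\tau_2/\tau_1}$.

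Second, I would apply the interpolation inequality of Proposition~3.6 in \cite{NakicTTV-18} (in the unbounded-domain form of Proposition~2.9 in \cite{NakicTTV-20a}) to $F$. Its proof combines three ingredients: the Carleman estimate of \cite{NakicRT-19,JerisonL-99} on balls of unit radius, whose large parameter scales like $V_\infty^{2/3}$ (the Landis-suboptimal scaling highlighted in the introduction); a chain-of-balls propagation from the $\theta$-ball inside each unit cube $k+(-1/2,1/2)^d$, $k\in\cK$, out to the whole cube, responsible for the nested radius $R=9\euler\sqrt{d}$ in the $t$-direction; and a summation over $k\in\cK$ that glues the local interpolation inequalities into a global one on $\Lambda\times(-1,1)$. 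Threading the $V_\infty^{2/3}$-scaling through all three steps yields a prefactor of the form $\theta^{-\kappa\tilde{D} V_\infty^{2/3}}$ with $\tilde{D}$ depending only on $d$; substituting $V_\infty\lesssim\lambda^{\tau_2/\tau_1}$ produces the exponent $\lambda^{2\tau_2/(3\tau_1)}$ in the claim, with a constant $D$ depending only on $c_2,\tau_2,C',d$. The Hadamard three-ball inequality on a unit cube supplies the logarithmic bound $1/\kappa\lesssim_d\log(1/\theta)$, and enlarging the outer norm from $\Lambda\times(-R,R)$ to $\RR^d\times(-R,R)$ is free.

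The principal technical obstacle is the bookkeeping of the $V_\infty^{2/3}$-dependence through the chain-of-balls and summation steps: in \cite{NakicTTV-18} this scaling is absorbed into an implicit global constant that does not track the $\lambda$-dependence, so one has to revisit the proof and thread the Carleman parameter carefully through each step, rather than just quoting the end result. Once this bookkeeping is done — which is worked out in \cite[Proposition~6.14]{Dicke-22} — the rest of the argument is a direct transcription of the proof in \cite{NakicTTV-18,NakicTTV-20a}.
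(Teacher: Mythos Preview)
Your proposal is correct and matches the paper's own treatment: the paper in fact omits the proof of this proposition, merely noting that the potential is bounded on $\Lambda$ by a multiple of $\lambda^{\tau_2/\tau_1}$ so that the situation reduces to that of bounded potentials and the argument can be extracted from \cite{NakicTTV-18,NakicTTV-20a}, with full details deferred to \cite[Proposition~6.14]{Dicke-22}. Your outline is precisely this, with somewhat more detail on the three ingredients (Carleman estimate with $V_\infty^{2/3}$-scaling, chain-of-balls propagation, summation over $\cK$) than the paper itself provides.
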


\begin{remark}\label{rem:Carleman}
	The appearance of the power $\lambda^{2\tau_2/3\tau_1}$ in Proposition~\ref{prop:covering-interpolation} is due to the following:
Once we insert in the upper bound $\|V\|_{\infty,\Lambda}\leq  c_2 d^{\tau_2/2} \, L^{\tau_2}$ on the potential on the box $\Lambda:= (-L,L)^d$
the bound on the sidelength $L \leq C'\lambda^{1/\tau_1} +1$,
we obtain $\|V\|_{\infty,\Lambda}\lesssim \lambda^{\tau_2/\tau_1}$.
The sup of the potential enters the interpolation inequality via the Carleman estimate
and appears in the scaling $\|V\|_{\infty,\Lambda}^{2/3}$, giving rise to the term  $\lambda^{2\tau_2/3\tau_1}$.
\end{remark}

With Proposition~\ref{prop:covering-interpolation} at hand, we are finally in position to prove
the main result of this note.

\begin{proof}[Proof of Theorem~\ref{thm:spectral-inequality}]
	We adopt the notation established in the preceding part of the current section.

	The lower bound in Lemma~\ref{lem:ghost} for $\varrho = R = 9\euler\sqrt{d}$ gives
	\[
		\norm{ f }_{L^2(\RR^d)}^2
		\leq
		\frac{1}{2R} \norm{ F }_{H^1(\RR^d\times(-R,R))}^2
		.
	\]
	In order to estimate the right-hand side further,  we combine the lower bound in Lemma~\ref{lem:ghost} for $\varrho = 1$ with
	the corresponding upper bound for $\varrho = R$ and obtain
	\[
		\frac{\norm{F}_{H^1(\RR^d\times(-R,R))}^2}{\norm{F}_{H^1(\RR^d\times(-1,1))}^2}
		\leq
		R(1+(1+\lambda) R^2)\euler^{2R\sqrt{\lambda}}
		\leq
		\euler^{K \lambda^{1/2}}
	\]
	with some constant $K$ depending only on the dimension. Together with \eqref{eq:decay-gamma-potential-ghost} and the
	bound from Proposition~\ref{prop:covering-interpolation} this yields
	\eqs{
		\norm{ F }_{H^1(\RR^d\times(-R,R))}^2
		&\leq
		\euler^{K\lambda^{1/2}}\norm{ F }_{H^1(\RR^d\times(-1,1))}^2
			\leq
			2\euler^{K\lambda^{1/2}} \norm{ F }_{H^1(\Lambda\times(-1,1))}^2\\
		&\leq
		\euler^{2K\lambda^{1/2}} \theta^{-2\kappa D \lambda^{2\tau_2/3\tau_1}} \norm{ F }_{H^1(\RR^d\times(-R,R))}^{2-\kappa}
			\norm{ f }_{L^2(\omega\cap\Lambda)}^\kappa
		,
	}
	that is,
	\[
		\norm{ F }_{H^1(\RR^d\times(-R,R))}
		\leq
		\euler^{2K\lambda^{1/2}/\kappa} \theta^{-2D \lambda^{2\tau_2/3\tau_1}} \norm{ f }_{L^2(\omega\cap\Lambda)}
		.
	\]
	In light of $1/\kappa \lesssim_d \log(1/\theta)$, $2\tau_2/3\tau_1 \geq 2/3 > 1/2$, and the definition of $\theta$ in
	\eqref{eq:defEquiconst}, the claim follows upon a suitable choice of the constant $C$ depending on $C', D$, and the dimension
	$d$.
\end{proof}%

\section{Partial confinement potentials}
\label{sec:tensor}
The above proof can easily be adapted to more general anisotropic potentials $V$, such as certain partial confinement potentials.
By the latter we mean potentials that behave like in Assumption~\ref{ass:potential} only with respect to certain coordinate
directions. For simplicity, we demonstrate this in the case
\be\label{eq:partial-confinement-operator}
	H = -\Delta + V
	\quad \text{with} \quad
	V(x_1,x_2) = W(x_1)
	,\
	(x_1,x_2) \in \RR^{d_1} \times \RR^{d-d_1}
	,
\ee
where $d_1 \in \NN$, $d_1 < d$, and where $W \in W_\loc^{1,\infty}(\RR^{d_1})$ satisfies Assumption~\ref{ass:potential} with $d$
replaced by $d_1$.

Since the operator $H$ no longer has purely discrete spectrum, an expansion as in \eqref{eq:ghostExpand} for the extension $F$ of
$f \in \Ran P_\lambda(H)$ via the ghost dimension construction is not available. However, straightforward adaptations of the
arguments in \cite{NakicTTV-20a,DickeRST-20} show that we still have $F \in H_\loc^2(\RR^{d+1})$,
$F(\cdot,t),\partial_t F(\cdot,t) \in \Ran P_\lambda(H)$ for all $t \in \RR$, as well as \eqref{eq:ghostBoundary} and
\eqref{eq:ghostHarmonic}. Also an analogue to Lemma~\ref{lem:ghost} remains valid verbatim.

Following the reasoning in the proof of \cite[Lemma~A.2]{DickeSV-22}, we see that $H$ admits the tensor representation
\[
	H
	=
	H_1\otimes I_2 + I_1 \otimes H_2
	,
\]
where $H_1 = -\Delta + W$ in $L^2(\RR^{d_1})$ and $H_2 = -\Delta$ in $L^2(\RR^{d-d_1})$, and where $I_1$ and $I_2$ denote the
identity operators in $L^2(\RR^{d_1})$ and $L^2(\RR^{d-d_1})$, respectively. Consequently, as in \cite[Corollary~A.5]{DickeSV-22}
(cf.\ also the proof of \cite[Lemma~2.3]{EgidiS-21}), every function $h \in \Ran P_\lambda(H)$ can be written as a finite sum
\[
	h
	=
	\sum_{k} \phi_k \otimes \psi_k
\]
with suitable $\phi_k\in\Ran P_\lambda(H_1)$ and $\psi_k\in\Ran P_\lambda(H_2)$. Applying this to $h = F(\cdot,t)$ and
$\partial_t F(\cdot,t)$ implies that $\partial_t F(\cdot,x_2,t)$ and $\partial_{x_2}^\alpha F(\cdot,x_2,t)$,
$\abs{\alpha} \leq 1$, belong to $\Ran P_{\lambda}(H_1)$ for all $t \in \RR$ and (almost) all $x_2 \in \RR^{d-d_1}$.
Theorem~\ref{thm:decay-intro} for $H_1$ instead of $H$ therefore provides an analogue to \eqref{eq:decay-gamma-potential-ghost} with
respect to the first $d_1$ coordinates, that is,
\be
	\norm{F}_{H^1(\RR^d\times(-1,1))}^2
	\leq
	2\norm{F}_{H^1(B^{(d_1)}(0,C'\lambda^{1/\tau_1})\times\RR^{d-d_1}\times(-1,1))}^2
	,
\ee
where $B^{(d_1)}(0,C'\lambda^{1/\tau_1}) \subset \RR^{d_1}$.
After that, we may follow our proof of
Theorem~\ref{thm:spectral-inequality} verbatim to get a statement analogous to Theorem~\ref{thm:spectral-inequality},
which also covers Theorem~\ref{thm:anisotropic_V_easy} above.
We here just give the corresponding result and refer to \cite{Dicke-22} for more details.

\begin{theorem}\label{thm:anisotropic_V}
	Let $H$ be as in \eqref{eq:partial-confinement-operator}, and let $\omega \in \RR^d$ be measurable such that for some
	$\delta \in (0,1/2)$ and $\alpha \geq 0$ each intersection $\omega \cap (k + (-1/2,1/2)^d)$, $k = (k_1,k_2) \in \ZZ^d$, contains
	a ball of radius $\delta^{1+\abs{k_1}^\alpha}$.
	Then there is a constant $C > 0$ depending only on the dimension $d$ and the parameters $\tau_1, \tau_2, c_1, c_2, \nu, M_\nu$
	connected to $W$ such that for all $\lambda \geq 1$ and all $f \in \Ran P_\lambda(H)$ we have
	\bes
		\norm{f}_{L^2(\RR^d)}
		\leq
		\Bigl(\frac{1}{\delta}\Bigr)^{C^{1+\alpha}\cdot \lambda^{(\alpha+2\tau_2/3)/\tau_1}}
		\norm{f}_{L^2(\omega)}
		.
	\ees
\end{theorem}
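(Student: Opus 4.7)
The proof should closely parallel that of Theorem~\ref{thm:spectral-inequality}, with the key structural difference being that localization now happens only in the first $d_1$ coordinates, via the tensor decomposition $H = H_1 \otimes I_2 + I_1 \otimes H_2$ noted earlier in the section. The plan is as follows. Fix $\lambda \geq 1$ and $f \in \Ran P_\lambda(H) \setminus \{0\}$, and let $F(x,t) = \cF_t f$ be its ghost-dimension extension. As remarked in the text, $F \in H_\loc^2(\RR^{d+1})$, the slices $F(\cdot,t)$ and $\partial_t F(\cdot,t)$ remain in $\Ran P_\lambda(H)$ for every $t$, the boundary identity \eqref{eq:ghostBoundary} and the elliptic identity \eqref{eq:ghostHarmonic} still hold, and the bounds from Lemma~\ref{lem:ghost} (linking the $H^1$-norm of $F$ on a slab $\RR^d\times(-\varrho,\varrho)$ to the $L^2$-norm of $f$) carry over unchanged since their proof only uses the functional calculus bounds $|\sinh(\sqrt{\mu}t)| \leq |t|\sqrt{\mu}\cosh(\sqrt{\mu}t)$ and the spectral support $\mu \leq \lambda$.

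Next, I would exploit the tensor representation of elements in $\Ran P_\lambda(H)$ as finite sums $\sum_k \phi_k \otimes \psi_k$ with $\phi_k \in \Ran P_\lambda(H_1)$. For almost every fixed $x_2 \in \RR^{d-d_1}$ and every $t \in (-1,1)$, each of $F(\cdot,x_2,t)$, $\partial_t F(\cdot,x_2,t)$, $\partial_{x_2}^\alpha F(\cdot,x_2,t)$ for $|\alpha|\leq 1$ lies in $\Ran P_\lambda(H_1)$, so Theorem~\ref{thm:decay-intro} applied to $H_1$ (in dimension $d_1$) yields, after integration over $x_2$ and $t$, the partial localization
\begin{equation*}
\|F\|_{H^1(\RR^d\times(-1,1))}^2 \leq 2 \|F\|_{H^1(B^{(d_1)}(0,C'\lambda^{1/\tau_1})\times\RR^{d-d_1}\times(-1,1))}^2 .
\end{equation*}
Enclosing $B^{(d_1)}(0,C'\lambda^{1/\tau_1})$ in the smallest cube $\Lambda_1 = (-L,L)^{d_1}$ of integer half-sidelength with $L \lesssim \lambda^{1/\tau_1}$, and setting $\cK_1 := \{ k_1 \in \ZZ^{d_1} : k_1 \in \Lambda_1 \}$, the hypothesis on $\omega$ gives that for every $k = (k_1,k_2)$ with $k_1 \in \cK_1$ the intersection $\omega \cap (k + (-1/2,1/2)^d)$ contains a ball of radius at least
\begin{equation*}
\theta := \delta^{2(2\sqrt{d_1}C')^\alpha \lambda^{\alpha/\tau_1}} .
\end{equation*}
Thus $\omega \cap (\Lambda_1 \times \RR^{d-d_1})$ is $\theta$-equidistributed with respect to the unit cubes indexed by $\cK_1 \times \ZZ^{d-d_1}$.

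The third step is the interpolation/Carleman inequality on the slab $\Lambda_1 \times \RR^{d-d_1} \times (-1,1)$. Here I would invoke the unbounded-domain version of the covering-and-Carleman machinery in \cite{NakicTTV-20a} (with singularity improvements from \cite{DickeRST-20} if convenient), exactly as in Proposition~\ref{prop:covering-interpolation}, noting that on this slab the potential $V(x_1,x_2) = W(x_1)$ is bounded by $c_2 d_1^{\tau_2/2} L^{\tau_2} \lesssim \lambda^{\tau_2/\tau_1}$ so the resulting Carleman parameter produces the same $\lambda^{2\tau_2/3\tau_1}$ scaling. This yields
\begin{equation*}
\|F\|_{H^1(\Lambda_1\times\RR^{d-d_1}\times(-1,1))} \leq \theta^{-\kappa D \lambda^{2\tau_2/3\tau_1}} \|F\|_{H^1(\RR^d\times(-R,R))}^{1-\kappa/2} \|f\|_{L^2(\omega \cap (\Lambda_1\times\RR^{d-d_1}))}^{\kappa/2}
\end{equation*}
with $1/\kappa \lesssim_d \log(1/\theta)$ and $R = 9\euler\sqrt{d}$. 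Combining this with the partial-localization bound, the two-sided bounds of Lemma~\ref{lem:ghost} at $\varrho = 1$ and $\varrho = R$ (which produce a factor $\euler^{K\lambda^{1/2}}$ absorbed by the $\theta^{-\kappa D\lambda^{2\tau_2/3\tau_1}}$ since $2\tau_2/3\tau_1 \geq 2/3 > 1/2$), and using $1/\kappa \lesssim \log(1/\theta)$ together with the explicit form of $\theta$, gives the claimed exponent $(\alpha + 2\tau_2/3)/\tau_1$ with the constant $C$ depending on $C'$, $D$ and the dimension $d$.

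The main obstacle I would anticipate is the third step: establishing the interpolation inequality on the cylindrical (partially unbounded) domain $\Lambda_1 \times \RR^{d-d_1}$ rather than on a bounded cube. In the fully confining case of Theorem~\ref{thm:spectral-inequality}, the cube $\Lambda$ absorbs the entire $H^1$-mass of $F(\cdot,t)$ up to a factor $2$, so one can work on a bounded box to which the Carleman-based proofs of \cite{NakicTTV-18} directly apply. In the present anisotropic setting the $x_2$ variable is not truncated and one must use the unbounded-domain Carleman and geometric-covering estimates of \cite{NakicTTV-20a}, and verify that the covering construction can be carried out uniformly in the unbounded $x_2$-directions (keeping all constants independent of the unboundedness), since only the $x_1$-cube has the equidistribution property with $\theta$ depending on $\lambda$. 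Once this is granted, the remainder of the argument is a routine transcription of the proof of Theorem~\ref{thm:spectral-inequality}.
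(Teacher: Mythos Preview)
Your proposal is correct and follows essentially the same route as the paper: tensor decomposition to reduce localization to $H_1$ in the first $d_1$ coordinates, Theorem~\ref{thm:decay-intro} for $H_1$ to obtain the partial localization estimate, and then the Carleman/interpolation machinery on the partially unbounded slab $\Lambda_1\times\RR^{d-d_1}\times(-1,1)$ combined with Lemma~\ref{lem:ghost}. In fact you spell out more of the endgame than the paper itself, which simply asserts that after the partial localization one may follow the proof of Theorem~\ref{thm:spectral-inequality} verbatim and refers to \cite{Dicke-22} for the details; your identification of the need for the unbounded-domain covering and Carleman estimates from \cite{NakicTTV-20a} is exactly the point that justifies this ``verbatim'' claim.
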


Note that this theorem relates to the spectral inequality for the partial harmonic oscillator from \cite{DickeSV-22} in
the same way as Theorem~\ref{thm:spectral-inequality} relates to the one for the harmonic oscillator from \cite{DickeSV-23}.

\small
\section{Statements and declarations}
\noindent
Our manuscript has no associated data.
\\
There are no conflicts of interest related to this work.
\\
No animals, plants, or mushrooms have been harmed in the course of research for this manuscript, except for nourishment of the authors.

\normalsize
%\bibliography{ILit-23}{}
%\bibliographystyle{alpha}

\newcommand{\etalchar}[1]{$^{#1}$}

\end{document}